\def\NZQ{\mathbb}               
\def\NN{{\NZQ N}}
\def\QQ{{\NZQ Q}}
\def\ZZ{{\NZQ Z}}
\def\RR{{\NZQ R}}
\def\PP{{\NZQ P}}
\newtheorem{Theorem}{Theorem}[section]
\newtheorem{Lemma}[Theorem]{Lemma}
\newtheorem{Corollary}[Theorem]{Corollary}
\newtheorem{Proposition}[Theorem]{Proposition}
\newtheorem{Remark}[Theorem]{Remark}
\newtheorem{Example}[Theorem]{Example}
\newtheorem{Definition}[Theorem]{Definition}
\let\epsilon\varepsilon
\let\phi=\varphi
\let\kappa=\varkappa
\begin{document}

\title{The Rees algebra and analytic spread of a divisorial filtration}
\author{Steven Dale Cutkosky}

\thanks{The first author was partially supported by NSF grant DMS-2054394.}

\address{Steven Dale Cutkosky, Department of Mathematics,
University of Missouri, Columbia, MO 65211, USA}
\email{cutkoskys@missouri.edu}

\begin{abstract} In this paper we investigate some properties of Rees algebras of divisorial filtrations and their analytic spread.
A classical theorem of McAdam shows that the analytic spread of an ideal $I$ in a formally equidimensional local ring is equal to the dimension of the ring if and only if the maximal ideal is an associated prime of $R/\overline{I^n}$ for some $n$. We show in Theorem \ref{Theorem3} that McAdam's theorem  holds for $\QQ$-divisorial filtrations in an equidimensional local ring which is essentially of finite type over an excellent local ring of dimension less than or equal to 3.  This generalizes an earlier result for $\QQ$-divisorial filtrations in an equicharacteristic zero excellent local domain by the author. This theorem does not hold for more general filtrations.  

We consider the question of the asymptotic behavior of the function $n\mapsto \lambda_R(R/I_n)$ for a $\QQ$-divisorial filtration $\mathcal I=\{I_n\}$ of $m_R$-primary ideals on a $d$-dimensional normal excellent local ring. It is known from earlier work of the author that the multiplicity
$$
e(\mathcal I)=d!
\lim_{n\rightarrow\infty}\frac{\lambda_R(R/I_n)}{n^d}
$$
can be irrational. We show in Lemma \ref{Lemmadiff} that the limsup of the first difference function
$$
\limsup_{n\rightarrow\infty}\frac{\lambda_R(I_n/I_{n+1})}{n^{d-1}}
$$ 
is always finite for a $\QQ$-divisorial filtration. 
We then give an example in Section \ref{SecExample} showing that this limsup may not exist as a limit.

In the final section, we give an example of a symbolic filtration $\{P^{(n)}\}$ of a prime  ideal $P$ in a normal two dimensional excellent local ring which has the property that  the set of Rees valuations of all the symbolic powers $P^{(n)}$  of $P$ is infinite.

\end{abstract}

\keywords{Rees Algebra, Analytic Spread, Divisorial Valuation, Divisorial Filtration}
\subjclass[2020]{13H15, 13A18, 14C17}

\maketitle

\section{Introduction}

\subsection{Graded Filtrations, Rees Algebras and Analytic Spread}

Let $R$ be a Noetherian local ring with maximal ideal $m_R$. $\lambda_R(M)$ will denote the length of an $R$-module $M$.

 If $I$ is an ideal in $R$, then the Rees algebra of $I$ is the graded $R$-algebra
$R[It]=\sum_{n\ge 0}I^nt^n$ and the analytic spread of $I$ is 
$$
\ell(I)=\dim R[It]/m_RR[It].
$$ 
The classical theory of the analytic spread of an ideal is expounded in \cite{HS}. A recent paper on related problems  using geometric methods is \cite{ORWY}. 

It is natural to extend the definition of analytic spread of an ideal to arbitrary graded filtrations of $R$. A graded filtration $\mathcal I=\{I_n\}$ of $R$ is a filtration of $R$ such that   $I_mI_n\subset I_{m+n}$ for all $m,n$.
Let $\mathcal I=\{I_n\}$ be a graded filtration on  $R$. The Rees algebra of  $\mathcal I$ is $R[\mathcal I]=\oplus_{n\ge 0}I_n$ and the analytic spread of the filtration $\mathcal  I$ is defined to be
\begin{equation}\label{eqN10}
\ell(\mathcal I)=\dim R[\mathcal I]/m_RR[\mathcal I].
\end{equation}
Thus if $I$ is an ideal in $R$ and $\mathcal I=\{I^n\}$ is the $I$-adic filtration, then $\ell(I)=\ell(\mathcal I)$.

We always have (\cite[Lemma 3.6]{CPS}) that 
\begin{equation}\label{eqZ1}
\ell(\mathcal I)\le \dim R,
\end{equation}
so the classical inequality $\ell(I)\le \dim R$ of ideals( \cite[Proposition 5.1.6]{HS}) extends to arbitrary graded filtrations.

In the case of the $I$-adic filtration $\mathcal I=\{I^n\}$ of an ideal in a  local ring $R$, the analytic spread $\ell(I)=e$ for some $e$ if and only if
$0<\lim_{n\rightarrow\infty}\frac{\lambda_R(I^n/m_RI^n)}{n^{e-1}}<\infty$.   This follows from the theory of Hilbert polynomials of a standard graded ring over a field. 

 This result fails spectacularly for general filtrations, as $\lambda_R(I_n/m_RI_n)$ can have arbitrarily large rates of growth on a fixed local ring $R$.  This can be seen in the following simple example, which was found with Hailong Dao and Jonathan Monta\~no. 

\begin{Example} Let $R=k[x,y,z]_{(x,y,z)}$  be the localization of a polynomial ring over a field $k$. Given any function $\sigma:\ZZ_{>0}\rightarrow\ZZ_{>0}$, define the graded filtration $\mathcal I=\{I_n\}$ by $I_0=R$ and $I_n=(z^{n+2})+z^{n+1}(x,y)^{\sigma(n)}$ for $n\ge 1$.
Then 
$$
\lambda_R(I_n/m_RI_n)=\sigma(n)+2.
$$
\end{Example}
The rate of growth of  $\lambda_R(I_n/m_RI_n)$ for    ``naturally occuring''  filtrations is known  to be polynomial in many cases. Shankar Dutta in \cite{Du} and  Hailong Dao and Jonathan Monta\~no in \cite{DM} have shown  that the growth of $\lambda_R(I^{(n)}/m_RI^{(n)})$ for the symbolic filtration $\{I^{(n)}\}$ of an ideal $I$ in a local ring $R$ has polynomial growth if $R$ and the modules $R/I^{(n)}$ have high depth for $n\gg 0$.

When $\mathcal I=\{I_n\}$ is an arbitrary graded family of $m_R$-primary ideals in a local ring $R$ of dimension $d$, then the rate of growth is always polynomial. We immediately find  the bound
$
\lambda_R(I_n/m_RI_n)\le cn^{d}
$
for some constant $c$ since  there is a power $b$ such that $m_R^b\subset I_1$, so that $m_R^{bn}\subset I_n$ for all $n$.

\subsection{McAdam's theorem for divisorial filtrations}
We have the following remarkable theorem by McAdam.

\begin{Theorem}\label{TheoremC4}(\cite{McA}, \cite[Theorem 5.4.6]{HS}) Let $R$ be a formally equidimensional local ring and $I$ be an ideal in $R$. Then $m_R\in \mbox{Ass}(R/\overline{I^n})$ for some $n$ if and only if $\ell(I)=\dim(R)$.
\end{Theorem}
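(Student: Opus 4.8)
The plan is to derive the equivalence from the theory of Rees valuations, taking as the essential structural input Ratliff's theorem that the Rees algebra of a proper ideal in a formally equidimensional local ring is again formally equidimensional. Write $d=\dim R$. By standard reductions (completing $R$, and then factoring out a minimal prime — all of which have dimension $d$, by equidimensionality) one may assume that $R$ is a complete local domain of dimension $d$; then $\mathcal R:=R[It]$ and its integral closure $\overline{\mathcal R}=\bigoplus_{n\ge 0}\overline{I^n}t^n$ in $R[t]$ are Noetherian, $\overline{\mathcal R}$ is a Krull domain integral over $\mathcal R$, and — this is where the hypothesis enters — both are equidimensional and catenary of dimension $d+1$. (For the excellent rings of interest in this paper these reductions are harmless; for a general formally equidimensional $R$ they require the usual care with analytic unramifiedness, and one works directly with $\mathcal R$, which is universally catenary by Ratliff.)

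I would then isolate two facts. First: since $\overline{\mathcal R}$ is integral over $\mathcal R$, and each minimal prime of $m_R\mathcal R$ of maximal residual dimension lies below a prime of $\overline{\mathcal R}$ of the same residual dimension, $\ell(I)=\dim \mathcal R/m_R\mathcal R=\dim \overline{\mathcal R}/m_R\overline{\mathcal R}$; because $\overline{\mathcal R}$ is equidimensional and catenary of dimension $d+1$, this number equals $d$ exactly when $\overline{\mathcal R}$ has a (necessarily homogeneous) prime $\mathfrak Q$ of height $1$ with $m_R\overline{\mathcal R}\subseteq\mathfrak Q$. Second: the Rees valuations $v_1,\dots,v_s$ of $I$ are the discrete valuations $\overline{\mathcal R}_{\mathfrak Q}$ as $\mathfrak Q$ runs over the minimal primes of $I\overline{\mathcal R}$ (all of height $1$), their centers on $R$ are the primes $\mathfrak Q\cap R$, and for every $n$ the ideal $\overline{I^n}=\bigcap_{i=1}^s\{r\in R:v_i(r)\ge n\,v_i(I)\}$ is an intersection of $(\mathfrak Q_i\cap R)$-primary ideals; in particular $\mathrm{Ass}(R/\overline{I^n})\subseteq\{\mathfrak Q_1\cap R,\dots,\mathfrak Q_s\cap R\}$.

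With these, the two implications are short. If $m_R\in\mathrm{Ass}(R/\overline{I^n})$ for some $n$, then $m_R$ is the center of some Rees valuation, say $m_R=\mathfrak Q\cap R$ with $\mathfrak Q$ minimal over $I\overline{\mathcal R}$; then $m_R\overline{\mathcal R}\subseteq\mathfrak Q$ and $\mathrm{ht}\,\mathfrak Q=1$, whence $\ell(I)=\dim\overline{\mathcal R}/m_R\overline{\mathcal R}\ge\dim\overline{\mathcal R}/\mathfrak Q=d$, and with the bound $\ell(I)\le d$ of \eqref{eqZ1} we get $\ell(I)=d$. Conversely, if $\ell(I)=d$ the first fact supplies a homogeneous height-$1$ prime $\mathfrak Q$ of $\overline{\mathcal R}$ with $m_R\overline{\mathcal R}\subseteq\mathfrak Q$; as $I\subseteq m_R$ we get $I\overline{\mathcal R}\subseteq\mathfrak Q$, and since $\mathrm{ht}\,\mathfrak Q=1$ while $I\overline{\mathcal R}\ne 0$, the prime $\mathfrak Q$ is minimal over $I\overline{\mathcal R}$, so $v:=\overline{\mathcal R}_{\mathfrak Q}$ is a Rees valuation of $I$ with center $m_R$. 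Finally, a Rees valuation $v$ centered at the maximal ideal must appear among the associated primes of some power: by minimality of the set of Rees valuations the component $\{r:v(r)\ge n\,v(I)\}$ of $\overline{I^n}$ — which is $m_R$-primary — cannot be omitted for all $n$, so for a suitable $n$ it is an irredundant component of $\overline{I^n}$, and hence $m_R\in\mathrm{Ass}(R/\overline{I^n})$.

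The technical heart of the argument, and the one place the hypothesis is genuinely used, is the equidimensionality and catenarity of $\overline{\mathcal R}$ (equivalently, Ratliff's theorem on Rees algebras of formally equidimensional rings): this is precisely what allows the exchange of ``$\mathrm{ht}\,\mathfrak Q=1$'' for ``$\dim\overline{\mathcal R}/\mathfrak Q=d$'' in both directions, and without it the statement fails, the obstruction being height-$1$ primes of $\overline{\mathcal R}$ of residual dimension $<d$, which arise as soon as $R$ has a minimal prime of dimension $<\dim R$. I expect the substantive parts of a full write-up to be this structural input and the reductions to the complete local domain case; the Rees-valuation bookkeeping and the extraction of an embedded prime from an irredundant $m_R$-primary component are routine.
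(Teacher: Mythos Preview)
The paper does not give its own proof of this theorem: Theorem~\ref{TheoremC4} is stated with attribution to \cite{McA} and \cite[Theorem 5.4.6]{HS} and is used as background, so there is no proof in the paper to compare against. Your argument is therefore being judged on its own.

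Your approach is essentially the standard one, close in spirit to the treatment in \cite{HS}. The key structural input you identify---Ratliff's theorem that the (extended) Rees algebra of a formally equidimensional local ring is itself equidimensional and universally catenary---is indeed exactly what makes the height/codimension exchange work, and your two ``facts'' are the right organizing principles. A few remarks on the details: the reduction to a complete local domain is more delicate than you let on, since a formally equidimensional local ring need not be analytically unramified and $\overline{\mathcal R}$ need not be Noetherian in general; the cleaner route (which you gesture at) is to stay with $\mathcal R$ and use that it is locally equidimensional and universally catenary, working with the normalization only generically. Also, in the final step, the passage from ``the $m_R$-primary component coming from $v$ is irredundant in $\overline{I^n}$'' to ``$m_R\in\mathrm{Ass}(R/\overline{I^n})$'' deserves a line: one takes $x$ in the intersection of the other components but not in the $v$-component, observes that $x$ has $m_R$-primary annihilator modulo $\overline{I^n}$, and concludes since $m_R$ is maximal. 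With these caveats the outline is sound.
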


The assumption of being formally equidimensional is not required for the if direction of Theorem \ref{TheoremC4} (this is Burch's theorem, \cite{Bu}, \cite[Proposition 5.4.7 ]{HS}).

To see how Theorem \ref{TheoremC4} can be extended, we  consider divisorial filtrations. Divisorial filtrations are defined in Section \ref{SecDiv}.
Divisorial filtrations arise naturally in ideal theory, as illustrated by the following classical theorem, following from work of Rees.

\begin{Theorem}\label{ReesThm} Suppose that $I$ is an ideal in an equidimensional excellent local ring $R$. Then the filtration $\{\overline{I^n}\}$ of $R$ is a divisorial filtration.
\end{Theorem}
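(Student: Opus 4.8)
The plan is to describe $\overline{I^n}$ by means of the Rees valuations of $I$ and to check, using excellence and equidimensionality, that these Rees valuations are divisorial valuations in the sense of Section \ref{SecDiv}. Recall from the theory of Rees valuations (\cite[Chapter 10]{HS}) that for an ideal $I$ in a Noetherian ring there is a finite set of valuations $v_1,\dots,v_r$, realized as the order valuations $\mathrm{ord}_E$ of the prime divisors $E$ appearing in the exceptional divisor of the normalization of the blowup of $I$, such that
\begin{equation}\label{eqReesdescription}
\overline{I^n}=\bigcap_{i=1}^{r}\{x\in R\ :\ v_i(x)\ge n\,v_i(I)\}
\end{equation}
for every $n\ge 0$. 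Since each $v_i(I)$ is a nonnegative integer, once we know the $v_i$ are divisorial, \eqref{eqReesdescription} exhibits $\{\overline{I^n}\}$ as a divisorial filtration.

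First I would reduce to the case that $R$ is a domain. The integral closure of an ideal may be computed modulo the minimal primes of $R$ (\cite[Proposition 1.1.5]{HS}): $x\in\overline{I^n}$ if and only if the image of $x$ in $R/\mathfrak p$ lies in $\overline{(I^n)(R/\mathfrak p)}$ for every minimal prime $\mathfrak p$ of $R$. Because $R$ is excellent and equidimensional, each $R/\mathfrak p$ is an excellent local domain with $\dim R/\mathfrak p=\dim R=:d$. So it is enough to show that, for an excellent local domain $S$ and an ideal $J$ of $S$, every Rees valuation of $J$ is a divisorial valuation of $S$; the filtration $\{\overline{I^n}\}$ is then the intersection, over all $\mathfrak p$ and over all Rees valuations of $I(R/\mathfrak p)$, of the corresponding valuation filtrations pulled back to $R$, which is exactly the shape of a divisorial filtration.

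Next, to see that the Rees valuations of an ideal $J$ in an excellent local domain $S$ are divisorial: let $Y\to\operatorname{Spec}(S)$ be the normalization of $\operatorname{Proj}(S[Jt])$. Since $S$ is excellent, hence Nagata, this normalization is finite, so $Y$ is a normal excellent scheme, projective and birational over $\operatorname{Spec}(S)$, and the Rees valuations of $J$ are the valuations $\mathrm{ord}_E$ as $E$ runs over the prime divisors of $Y$ in the fiber over $V(J)$. Fix such an $E$, with center $\mathfrak q$ on $\operatorname{Spec}(S)$. Then $\mathcal O_{Y,E}$ is a one-dimensional local domain, essentially of finite type over $S$; as $S$ is excellent it is universally catenary, so the dimension formula for this extension, together with $\dim\mathcal O_{Y,E}=1$ and the fact that $Y$ is birational over $\operatorname{Spec}(S)$, gives
\[
\operatorname{trdeg}_{\kappa(\mathfrak q)}\kappa(E)=\operatorname{ht}(\mathfrak q)-1 .
\]
This Abhyankar equality is precisely the defining property of a divisorial valuation of $S$ centered at $\mathfrak q$. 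Feeding this back into \eqref{eqReesdescription} and the reduction above shows that $\{\overline{I^n}\}$ is a divisorial filtration.

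The main obstacle I expect is the bookkeeping in this identification rather than any single hard step: one must match the Rees valuations, produced abstractly through a normalized blowup, with the precise notion of divisorial valuation used in Section \ref{SecDiv}. The role of excellence is to supply finiteness of the normalization and universal catenariness (hence the dimension formula with no defect), while the role of equidimensionality is to ensure $\dim(R/\mathfrak p)=d$ for every minimal prime $\mathfrak p$, so that the valuations obtained from the various $R/\mathfrak p$ are uniformly divisorial over $R$. One should also be attentive to the fact that $I$ need not be $m_R$-primary, so the centers $\mathfrak q$ may vary, and that $R$ need not be a domain or normal, so one genuinely works over each $R/\mathfrak p$ and reassembles.
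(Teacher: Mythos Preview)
Your proposal is correct and follows essentially the same route as the paper's proof, which simply cites Rees \cite{Re} (or \cite[Theorem 10.2.2]{HS}) for the Rees-valuation description \eqref{eqReesdescription}, \cite[Proposition 10.4.3]{HS} for the fact that Rees valuations are divisorial, and Scholie IV.7.8.3~(x) of \cite{EGAIV} for the universal catenariness of excellent rings needed in the dimension formula. You have unpacked these citations accurately: the reduction to minimal primes via \cite[Proposition 1.1.5]{HS}, the normalized blowup realizing the Rees valuations, and the dimension formula yielding the Abhyankar equality are exactly the content behind those references.
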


\begin{proof} This follows from \cite{Re} or the proof of  \cite[Theorem 10.2.2]{HS},  \cite[Proposition 10.4.3]{HS} and  \cite[Scholie IV.7.8.3 (x)]{EGAIV}.
\end{proof}

The Rees valuations of an ideal $I$ are the divisorial valuations occuring in an irredundant representation of the filtration $\{\overline{I^n}\}$ as a divisorial filtration.

Another important example of a divisorial filtration is the filtration of symbolic powers $\{P^{(n)}\}$ of a prime ideal in a local ring $R$ such that $R_P$ is a regular local ring. The paper \cite{DDGHN} gives a survey of some recent results on symbolic algebras.

The following theorem is proven for local domains in \cite{CPS}. The proof is the same for local rings, using the notation on valuations of a ring and divisorial filtrations given in Section \ref{SecDiv}.

\begin{Theorem}\label{PropAS7*}(Theorem 1.4 \cite{CPS} extended to local rings) Suppose that $R$ is a  local ring and $\mathcal I=\{I_n\}$ is an $\RR$-divisorial filtration on $R$. 
	Then
	\begin{enumerate}
	\item[a)]$I_n=\overline{I_n}$ for all $n>0$ and
	\item[b)] Suppose that $\ell(\mathcal I)=\dim R$.  
Then there exists a positive integer $n_0$ such that $m_R$ is an associated prime of $I_n$ for all $n\ge n_0$.
	\end{enumerate}
		\end{Theorem}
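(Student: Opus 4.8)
The plan is to reduce the assertion to the combinatorics of the Rees valuations of $\mathcal I$, and then, assuming $\ell(\mathcal I)=\dim R$, to produce for each large $n$ an element of $(I_n:m_R)\setminus I_n$, which certifies that $m_R\in\operatorname{Ass}(R/I_n)$. Let $v_1,\dots,v_s$ be the divisorial Rees valuations of $\mathcal I$, with coefficients $a_1,\dots,a_s>0$, so that $I_n=\overline{I_n}=\bigcap_{j=1}^s\{f\in R:v_j(f)\ge na_j\}$ for all $n$, irredundantly. Since $m_R$ is maximal, $m_R\in\operatorname{Ass}(R/I_n)$ if and only if $(I_n:m_R)\ne I_n$. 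Put $c_j:=v_j(m_R)=\min\{v_j(g):g\in m_R\}$; then $c_j\ge 1$ exactly when the center of $v_j$ on $R$ is $m_R$, and a direct computation with the valuations gives $(I_n:m_R)=\bigcap_{j=1}^s\{f\in R:v_j(f)\ge na_j-c_j\}$. So it suffices to find, for all $n\gg 0$, an index $j_0$ with $c_{j_0}\ge 1$ together with an $f\in R$ such that $v_j(f)\ge\lceil na_j\rceil-c_j$ for every $j$ but $v_{j_0}(f)<\lceil na_{j_0}\rceil$. I would first reduce to $R$ a domain using the minimal-prime conventions for valuations of Section \ref{SecDiv}, exactly as in the domain proof of \cite{CPS}.

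The next step is to show that $\ell(\mathcal I)=\dim R=:d$ forces some $v_{j_0}$ to have center $m_R$. Arguing by contraposition, suppose every $v_j$ has center $\mathfrak p_j\subsetneq m_R$, and choose a normal projective birational $\pi\colon Y\to\operatorname{Spec}R$ on which each $v_j=\operatorname{ord}_{E_j}$ for a prime divisor $E_j$ with $\bigcup_jE_j=\operatorname{Exc}(\pi)$, so that $R[\mathcal I]=\bigoplus_n\Gamma(Y,\mathcal O_Y(-\lceil nD\rceil))$ with $D=\sum_ja_jE_j$. Each $E_j$ is irreducible of dimension $d-1$, and since $\mathfrak p_j\ne m_R$ the closed set $E_j\cap\pi^{-1}(m_R)$ is a proper subset of $E_j$, hence of dimension $\le d-2$; as $\pi^{-1}(m_R)\subseteq\operatorname{Exc}(\pi)=\bigcup_jE_j$, it follows that $\dim\pi^{-1}(m_R)\le d-2$. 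Using the identification of $\ell(\mathcal I)$ with $1+\dim$ of the special fiber of $\operatorname{Proj}R[\mathcal I]$ — equivalently, that $\lambda_R(I_n/m_RI_n)$ is controlled by cohomology of sheaves on $\pi^{-1}(m_R)$ and hence is $O(n^{\dim\pi^{-1}(m_R)})$ — we obtain $\ell(\mathcal I)\le d-1$, a contradiction. I expect this to be the main obstacle: for an $\RR$-divisorial filtration $R[\mathcal I]$ need not be Noetherian, so this dimension count must be made rigorous directly with the section ring and its Proj (and with the valuations of a possibly non-domain ring), which is where the machinery of Section \ref{SecDiv} and \cite{CPS} enters.

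Finally, fix $j_0$ with $c_0:=c_{j_0}\ge 1$ and set $L_n:=\bigcap_{j\ne j_0}\{f\in R:v_j(f)\ge\lceil na_j\rceil\}$, a nonzero ideal with $I_n\subseteq L_n$ and $L_mL_n\subseteq L_{m+n}$. Let $\beta_n:=\min\{v_{j_0}(f):0\ne f\in L_n\}$; then $\beta_{m+n}\le\beta_m+\beta_n$, so $b:=\lim_n\beta_n/n$ exists. Because $v_{j_0}$ occurs irredundantly, there are $n_1$ and $q\in L_{n_1}$ with $v_{j_0}(q)\le\lceil n_1a_{j_0}\rceil-1<n_1a_{j_0}$, so $b\le\beta_{n_1}/n_1<a_{j_0}$. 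Since $v_{j_0}$ is divisorial its value group is $\ZZ$, so the numerical semigroup $v_{j_0}(R\setminus\{0\})\subseteq\ZZ_{\ge 0}$ has greatest common divisor $1$ and contains all integers $\ge$ some $N_0$; as $L_n$ is an ideal, $\{v_{j_0}(f):0\ne f\in L_n\}$ is stable under adding such integers and therefore contains $[\beta_n+N_0,\infty)$. Choosing $b'$ with $b<b'<a_{j_0}$, we have $\beta_n\le b'n$ and hence $\beta_n+N_0\le\lceil na_{j_0}\rceil-1$ for all $n\gg 0$, so there is $f\in L_n$ with $v_{j_0}(f)=\lceil na_{j_0}\rceil-1$. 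For such $n$ and $f$ one has $v_j(f)\ge\lceil na_j\rceil\ge\lceil na_j\rceil-c_j$ for $j\ne j_0$, while $\lceil na_{j_0}\rceil-c_0\le v_{j_0}(f)=\lceil na_{j_0}\rceil-1<\lceil na_{j_0}\rceil$ (here $c_0\ge 1$ is used); hence $f\in(I_n:m_R)\setminus I_n$, and $m_R\in\operatorname{Ass}(R/I_n)$ for every $n$ past this threshold, which is the desired $n_0$.
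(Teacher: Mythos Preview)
The paper does not give a self-contained proof of this theorem; it simply cites Theorem~1.4 of \cite{CPS} (proved there for domains) and asserts that the same argument extends to local rings using the valuation conventions of Section~\ref{SecDiv}. So there is no detailed ``paper's proof'' to compare your argument against, and your outline must stand on its own.

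Your colon computation $(I_n:m_R)=\bigcap_j\{f:v_j(f)\ge\lceil na_j\rceil-c_j\}$ is correct, and your final step is essentially correct and is the heart of the matter: the subadditive/Fekete argument on $\beta_n=\min\{v_{j_0}(f):0\ne f\in L_n\}$, combined with the fact that the numerical semigroup $v_{j_0}(R\setminus\{0\})$ has gcd $1$ (since $v_{j_0}$ has value group $\ZZ$ on the quotient field), does produce for all large $n$ an $f\in L_n$ with $v_{j_0}(f)=\lceil na_{j_0}\rceil-1$, and this $f$ witnesses $(I_n:m_R)\ne I_n$.

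The genuine gap is in your Step~4. You cannot in general arrange $\bigcup_jE_j=\operatorname{Exc}(\pi)$, so the inclusion $\pi^{-1}(m_R)\subseteq\bigcup_jE_j$ is unjustified; and the asserted link ``$\lambda_R(I_n/m_RI_n)$ is controlled by cohomology of sheaves on $\pi^{-1}(m_R)$'' is not right --- what is relevant is the support of $D=\sum a_jE_j$, not the special fiber of the particular $\pi$ you chose. Moreover, for non-Noetherian $R[\mathcal I]$ the identification of $\ell(\mathcal I)$ with one plus a fiber dimension of $\operatorname{Proj}R[\mathcal I]$ needs justification. Fortunately the conclusion of Step~4 has a much simpler proof (for $R$ a domain, to which you are reducing anyway): if no $v_j$ in an irredundant representation has center $m_R$, then by prime avoidance there exists $x\in m_R$ with $v_j(x)=0$ for every $j$. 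Then $xI_n=I_n\cap xR$ for all $n$ (if $xg\in I_n$ then $v_j(g)=v_j(xg)\ge\lceil na_j\rceil$), so $R[\mathcal I]/xR[\mathcal I]\cong \overline R[\overline{\mathcal I}]$ where $\overline R=R/xR$ and $\overline I_n=(I_n+xR)/xR$. Hence
\[
\ell(\mathcal I)=\dim\bigl(R[\mathcal I]/m_RR[\mathcal I]\bigr)=\dim\bigl(\overline R[\overline{\mathcal I}]/m_{\overline R}\,\overline R[\overline{\mathcal I}]\bigr)\le \dim\overline R\le d-1
\]
by (\ref{eqZ1}) applied to $\overline R$, contradicting $\ell(\mathcal I)=d$. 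With this replacement for Step~4, your argument goes through in the domain case; the passage to arbitrary local rings via Section~\ref{SecDiv} then follows the paper's (and \cite{CPS}'s) indicated reduction.
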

	
	The following theorem is a generalization of Theorem \ref{TheoremC4} to divisorial filtrations. If $\mathcal I=\{I_n\}$ is a divisorial filtration on an excellent local ring $R$, then $I_n=\overline{I_n}$ for all $n$ and $R$ is formally equidimensional if $R$ is equidimensional (Scholie IV.7.8.3 (x) \cite{EGAIV}).
	
\begin{Theorem}\label{Theorem3} Let $R$ be an equidimensional local ring which is essentially of finite type over an excellent local ring of dimension less than or equal to 3. Let $\mathcal I=\{I_n\}$ be a $\QQ$-divisorial filtration on $R$. Then the following are equivalent.
\begin{enumerate}
\item[1)] The analytic spread of $\mathcal I$ is $\ell(\mathcal I)=\dim R$.
\item[2)] There exists $n_0\in \ZZ_{>0}$ such that $m_R\in \mbox{Ass}(R/I_n)$ if $n\ge n_0$.
\item[3)] $m_R\in \mbox{Ass}(R/I_{m_0})$ for some $m_0\in \ZZ_{>0}$.
\end{enumerate}
\end{Theorem}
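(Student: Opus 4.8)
The plan is to establish the cycle of implications $3)\Rightarrow 1)\Rightarrow 2)\Rightarrow 3)$, each link of which is already available from the results collected in the introduction. I would begin with a brief bookkeeping remark on hypotheses: a local ring essentially of finite type over a field is excellent, and an equidimensional excellent local ring is formally equidimensional (Scholie IV.7.8.3 (x) \cite{EGAIV}); moreover a $\QQ$-divisorial filtration is in particular an $\RR$-divisorial filtration. Hence the standing hypotheses of Theorem \ref{Theorem2} (equidimensional, essentially of finite type over a field, $\QQ$-divisorial) and of Theorem \ref{PropAS7*} (local ring, $\RR$-divisorial) are both satisfied by $R$ and $\mathcal I$.

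The implication $3)\Rightarrow 1)$ is then exactly Theorem \ref{Theorem2}: the hypothesis that $m_R\in\mbox{Ass}(R/I_{m_0})$ for some $m_0\in\ZZ_{>0}$ is precisely condition $3)$, and its conclusion $\ell(\mathcal I)=\dim R$ is precisely condition $1)$. The implication $1)\Rightarrow 2)$ is exactly Theorem \ref{PropAS7*}: granting $\ell(\mathcal I)=\dim R$, that theorem produces an $n_0\in\ZZ_{>0}$ with $m_R\in\mbox{Ass}(R/I_n)$ for every $n\ge n_0$, which is condition $2)$. Finally $2)\Rightarrow 3)$ is immediate, taking $m_0=n_0$.

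I do not anticipate any genuine obstacle in assembling this argument; the mathematical substance lies in the proof of Theorem \ref{Theorem2}, carried out in Section \ref{Secproof}, and in the proof of Theorem \ref{PropAS7*} in \cite{CPS}, while Theorem \ref{Theorem3} merely packages these together with the trivial implication into an ``if and only if'' statement paralleling McAdam's Theorem \ref{TheoremC4}. The only step requiring a moment's care is the opening remark, namely checking that the ``essentially of finite type over a field'' hypothesis does place us within the scope of both cited theorems — in particular that equidimensionality upgrades to formal equidimensionality via excellence, and that $\QQ$-divisorial filtrations are covered by the $\RR$-divisorial statement of Theorem \ref{PropAS7*}.
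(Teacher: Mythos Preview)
Your proposal is correct and matches the paper's approach exactly: the paper simply records that $3)\Rightarrow 1)$ is Theorem~\ref{Theorem2} and $1)\Rightarrow 2)$ is Theorem~\ref{PropAS7*}, leaving $2)\Rightarrow 3)$ implicit. Your additional remarks verifying that the hypotheses of the cited theorems are met (excellence, formal equidimensionality, $\QQ$-divisorial implies $\RR$-divisorial) are a welcome clarification but not strictly needed, since Theorem~\ref{PropAS7*} as stated applies to any local ring with an $\RR$-divisorial filtration.
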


3) implies 1) of Theorem \ref{Theorem3} is Theorem \ref{Theorem2} and Temkin's theorem on existence of regular alterations \cite[Theorem 1.2.5]{Te2}. Temkin's theorem shows that if $X$ is an integral scheme which is of finite type over an excellent domain of dimension less than or equal to three, then there exists a projective regular alteration $X'\rightarrow X$. Alterations and regular alterations are defined in Definition \ref{AltDef} and just after this definition.
1) implies 2) follows from Theorem \ref{PropAS7*}.

Theorem \ref{Theorem3} is proven for excellent equicharacteristic zero local domains in \cite[Theorem 1.4]{SDCProc}, using the existence of resolutions of singularities of excellent equicharacteristic zero domains (\cite{Hi} or \cite{Te}). The proof in \cite{SDCProc} shows that Theorem \ref{Theorem3} is true  for excellent local domains of dimension $\le 3$, using the existence of resolutions of singularities of  excellent domains of dimension $\le 3$ proven in \cite{CP}.

The conclusions of 3) implies 1) of Theorem \ref{Theorem3} do not hold for more general filtrations. In fact, there exist $\RR$-divisorial filtrations $\mathcal I$ on an excellent equicharacteristic zero local domain such that 
the conclusions of Theorem \ref{Theorem2} are false, as shown in Example 1.5 \cite{SDCProc}.

The existence of regular alterations over a local domain which is essentially of finite type over a field was first proven by de Jong
in \cite[Theorem 4.1]{dJ}.

\begin{Remark} The conclusions of Theorem \ref{Theorem3} hold whenever $R$ is an equidimensional excellent local ring which has the property that if $P$ is a minimal prime of $R$ and $X\rightarrow \mbox{Spec}(R/P)$ is a birational projective morphism with $X$ integral, then there exists a regular alteration $X'\rightarrow X$; in fact, the above proof of Theorem \ref{Theorem3} (replacing  the reference to Temkin's theorem with this assumption) proves this  result.  
\end{Remark}

 
		
\subsection{Hilbert functions of divisorial $m_R$-filtrations}
If $I$ is an ideal in a Noetherian local ring $R$, the function $\lambda_R(I^n/m_RI^n)$ is a polynomial for large $n$, whose degree is the analytic spread $\ell(I)$ of $I$. The computation of asymptotic properties of the function $\lambda_R(I_n/m_RI_n)$ is difficult for a divisorial filtration $\mathcal I=\{I_n\}$. In the case that $R$ is normal and excellent of dimension two, we show in \cite[Theorem 1.6]{C1} that for a divisorial filtration $\mathcal I$ of $R$, there exists $\alpha\in \QQ_{\ge 0}$ such that 
$$
\lambda_R(I_n/m_RI_n)=n\alpha+\sigma(n)
$$
for $n>0$, where $\sigma(n)$ is a bounded function of $n$. We have that $\alpha>0$ if and only if $\ell(\mathcal I)=2=\dim R$.
The proof uses Zariski decomposition, which does not exist in general if $\dim R\ge 3$. 

Now suppose that $\mathcal I$ is a divisorial filtration of $m_R$ ideals. Then, related to the function $\lambda_R(I_n/m_RI_n)$, we have the functions 
\begin{equation}\label{Hilbfun}
n\mapsto \lambda_R(R/I_n).
\end{equation}
and its first difference $n\mapsto \lambda_R(I_n/I_{n+1})$.
A good property of the function (\ref{Hilbfun}) is that the multiplicity 
$$
e(\mathcal I)=\lim_{n\rightarrow\infty}
d!\frac{\lambda_R(R/I_n)}{n^d}
$$
always exists.  In fact, this  limit always exists for arbitrary graded families of $m_R$-primary ideals in analytically unramified local rings (\cite[Theorem 1.1]{C2} and  \cite[Theorem 4.2]{CAs}). This theorem builds on earlier work by   \cite{ELS}, \cite{Mus}, \cite{Ok}, \cite{KK} and \cite{LM}.

The multiplicity $e(\mathcal I)$ is related to the analytic spread $\ell(\mathcal I)$ as follows. 

\begin{Lemma} Suppose that $R$ is a $d$-dimensional excellent normal local ring and $\mathcal I=\{I_n\}$ is a divisorial filtration of $m_R$-primary ideals of $R$. Then $e(\mathcal I)>0$ and $\ell(\mathcal I)=d$.
\end{Lemma}

We give an outline of the proof. $\ell(\mathcal I)=d$ by Theorem \ref{Theorem3}. There exist divisorial valuations $w_j$ on $R$ (the Rees valuations of $m_R$) such that $\mbox{center}_R(w_j)=m_R$ for all $j$, and positive integers $b_j$ such that 
$$
\overline{m_R^n}=\cap_jI(w_j)_{mb_j}\mbox{ for all $n\ge 0$}.
$$
Writing 
$$
I_n=\cap_jI(v_i)_{na_i}
$$
for suitable divisorial valuations $v_i$ with $\mbox{center}_R(v_i)=m_R$ and $a_i\in \QQ$, we see by Izumi's Theorem (\cite{Re2}) that there exists a positive integer $\beta$ such that $I_{\beta n}\subset \overline{m_R^n}$ for all $n>0$. Now 
the ring $\sum_{n\ge 0}\overline{m_R^n}t^n$ is a finite $R[m_Rt]$-module by \cite[Proposition 5.2.1]{HS} and \cite[Corollary 9.2.1]{HS}, so
$$
0<\lim_{n\rightarrow\infty}\frac{\lambda_R(R/\overline{m_R^n})}{n^d}
\le \lim_{n\rightarrow\infty}\frac{\lambda_R(R/I_n)}{n^d}=e(\mathcal I).
$$

In this subsection, 
we  consider the asymptotic properties of the difference function $n\mapsto \ell_R(I_n/I_{n+1})$. In  the  case of an $I$-adic filtration, this function is a polynomial of degree $d-1$ for $n\gg0$.
We always have that
\begin{equation}\label{diff1}
\limsup_{n\rightarrow\infty}\frac{\lambda_R(I_n/I_{n+1})}{n^{d-1}}
\end{equation}
is finite for $\QQ$-divisorial filtrations of $m_R$-primary ideals, as is shown in Lemma \ref{Lemmadiff}.  However, there are examples of graded families of ideals such that this limsup is infinite (\cite[Theorem 4.3]{CAs}).  Even when this limsup is finite, it may be that the limsup is not a limit for graded filtrations of $m_R$-primary ideals (\cite[Theorem 4.6]{CAs}). 

The function (\ref{Hilbfun}) is  very nice in dimension 2. 
In Section 8 \cite{CSr}, it is shown that if  $R$ is a two dimensional normal excellent local ring and $\mathcal I=\{I_n\}$ is a $\QQ$-divisorial filtration of $m_R$-primary ideals on $R$, then there exists a quadratic polynomial $P(n)$ with rational coefficients, a function $f(n)$  and a constant $c$ such that
$$
\lambda_R(R/I_n)=P(n)+f(n)
$$
with $|f(n)|<c$ for all $n$.
In  \cite[Theorem 9]{CSr}, it is shown that if $R$ has equicharacteristic zero then $f(n)$ is eventually periodic. Example 5 \cite{CSr} shows that $f(n)$ may not be eventually periodic if $R$ has characteristic $p>0$.

It follows that in  dimension two, the multiplicity
$e(\mathcal I)$ is always a rational number.  Further, the limsup  (\ref{diff1}) is a limit, and this limit is a rational number.

In Example 6 \cite{CSr} and in \cite{C}, 
examples are given of a three dimensional normal local domain $R$ with an isolated singularity and a divisorial valuation $v$ on $R$ such that the center of $v$ on $R$ is the maximal ideal $m_R$ and 
the multiplicity $e(\mathcal I)$ of the filtration $\mathcal I=\{I_n\}$ where $I_n=\{x\in R\mid v(x)\ge n\}$ is irrational.

By (\ref{diff1}), the limsup of difference functions always exists. 
We consider in Section \ref{SecExample} the question of if this limsup is always a limit; that is, does the limit
\begin{equation}\label{diff2}
 \lim_{n\rightarrow\infty}\frac{\lambda_R(I_n/I_{n+1})}{n^{d-1}}
\end{equation}
of a $\QQ$-divisorial filtration of $m_R$-primary  ideals always exist?

As we have seen above, this limit does always exist and is even a rational number when the dimension of $R$ is $2$.

We give in Section \ref{SecExample} an example showing  that the limit (\ref{diff2}) does not exist for a $\QQ$-divisorial filtration on a three dimensional normal
local ring $R$ which is essentially of finite type over an algebraically closed field of characteristic zero.

To accomplish this, we start with the construction of an example with irrational multiplicity in \cite{C}. In this example, a resolution of singularities $U\rightarrow \mbox{Spec}(R)$ of $\mbox{Spec}(R)$ is constructed which 
has two exceptional divisors, $\overline S$ and $F$ both of which contract to $m_R$. Let $v$ be the canonical valuation associated to the valuation ring $\mathcal O_F$ and $\mathcal I=\{I_n\}$ where $I_n=I(v)_n=\{x\in R\mid v(x)\ge n\}$, which is a $\ZZ$-divisorial filtration on $R$.

It is shown in the formula before  \cite[Lemma 2.2]{C} and   \cite[Theorem 4.1]{C} that
$$
I_n=I(nF):=\Gamma(U,\mathcal O_U(-nF))=\Gamma(U,\mathcal O_U(-D_n))
$$ 
where  $D_n=\lceil n\alpha\rceil \overline S+nF$, $\alpha=\frac{3}{9-\sqrt{3}}$.    It is further shown that the divisors $-D_n$ are nef divisors on $U$.
Thus by formula (9) and   \cite[Proposition 2.4]{C}, 
$$
\lim_{n\rightarrow\infty}\frac{\lambda_R(R/I(nF))}{n^3}=-\frac{((-\alpha \overline S-F)^3)}{3!}=\alpha^3468-486\alpha^2+162\alpha+54,
$$
so that 
\begin{equation}\label{form30}
e(\mathcal I)=3!(\alpha^3468-486\alpha^2+162\alpha+54)=\frac{72252}{169}-\frac{162}{169}\sqrt{3}
\end{equation}
is an irrational number. Equation (\ref{form30}) corrects equation (12) of \cite{C}, where $\alpha^3468-486\alpha^2+162\alpha+54$ is incorrectly calculated to be $\frac{12042}{169}-\frac{27}{169}\sqrt{3}$.

To compute the second difference $\lambda_R(I(nF)/I(n+1)F)$, we must make a finer calculation, computing the function $n\mapsto n\lambda_R(R/I_n)$ explicitely up to a constant times $n$.

In (\ref{form10}) it is shown that		  
\begin{equation}\label{form10*}
\lambda_R(R/I(nF))=\frac{1}{6}(D_n^3)+\frac{1}{4}(D_n^2\cdot K_Y)+\delta(n)
\end{equation}
for $n\in \NN$ where there is a constant $c_1$ such that $|\delta(n)|<c_1n$ for all $n\in \NN$. By (\ref{form5}) and (\ref{form22}), this length has the numeric formulation
$$
\begin{array}{lll}
\lambda_R(R/I(nF))&=&\frac{486}{6}\lceil \alpha n\rceil^3-\frac{486}{6}n\lceil \alpha n\rceil^2+\frac{162}{6}n^2\lceil \alpha n\rceil+\frac{54}{6}n^3\\
&&-\frac{792}{4}\lceil n\alpha\rceil^2+\frac{564}{4}n\lceil n\alpha\rceil-\frac{174}{4}n^2+\delta(n).
\end{array}
$$
So taking the limit in this equation, we obtain 
\begin{equation}\label{form7*}
e(\mathcal I(F))=3!\left(\lim_{n\rightarrow\infty}\frac{\lambda_R(R/I(nF))}{n^3}\right)=3!(\alpha^3468-486\alpha^2+162\alpha+54)
=\frac{72252}{169}-\frac{162}{169}\sqrt{3}
\end{equation}
obtaining (\ref{form30}) by a different method.


From (\ref{form10*}), we calculate in (\ref{form8}) that 
\begin{equation}\label{form8*}
\lambda_R(I(nF)/I((n+1)F))=\lambda_R(R/I((n+1)F))-\lambda_R(R/I(nF))=\frac{1}{6}\left((D_{n+1}^3)-(D_n^3)\right)+\epsilon(n)
\end{equation}
where there is a constant $c''$ such that $|\epsilon(n)|<c''n$ for $n\in\NN$. At the end of Subsection \ref{SecExample1}, it is shown that $\ZZ_{>0}$ is the union $\ZZ_{>0}=\Sigma_1\coprod \Sigma_2$ where both $\Sigma_1$ and $\Sigma_2$ are infinite, 
the limit as $n\rightarrow\infty$ restricted to $n\in \Sigma_1$ of $\frac{\lambda_R(I(nF)/I((n+1)F)}{n^2}$ is 
$$
\frac{1}{6}\left(-486\alpha^2+324\alpha+162\right)
=\frac{144504}{6(26)^2}-\frac{324}{6(26)^2}\sqrt{3}
$$
and the limit as $n\rightarrow\infty$ restricted to $n\in \Sigma_2$ of $\frac{\lambda_R(I(nF)/I((n+1)F)}{n^2}$ is
$$
\frac{1}{6}\left(918\alpha^2-810\alpha+324\right)
=\frac{106596}{6(26)^2}-\frac{4536}{6(26)^2}\sqrt{3}.
$$
These two limits, over $n$ in $\Sigma_1$ and $\Sigma_2$ respectively are not equal, so the limit
$$
\lim_{n\rightarrow\infty}\frac{\lambda_R(I_n/I_{n+1})}{n^2}
$$
does not exist.

\subsection{Symbolic powers with infinitely many Rees valuations} 
Let $R$ be a $d$-dimensional excellent normal local ring. Let $\mathcal I=\{I_n\}$ be a graded family of ideals of $R$ ($\mathcal I$ is a filtration of $R$ and $I_mI_n\subset I_{m+n}$ for all $m,n$). We have that $\sqrt{I_n}=\sqrt{I_1}$ for all $n$. 
We define the Rees valuations of $\mathcal I$ to be the divisorial valuations $v$ of $R$ such that the center of $v$ on $\mbox{Proj}(R[\mathcal I])$ has dimension $d-1$ and the center $\mbox{center}_R(v)$ of $v$ on $R$ contains $I_1$ (and so contains $I_n$ for all $n$).
In the case that $\mathcal I=\{I^n\}$ is the $I$-adic filtration for some ideal $I$ of $R$, this agrees with the classical definition of the Rees valuations of $I$ (\cite[Chapter 10]{HS}).

Suppose that $\mathcal I$ is a divisorial filtration with $I_n=I(v_1)_{n\lambda_1}\cap \cdots\cap I(v_r)_{n\lambda_r}$ for all $n$.
Then the Rees valuations of $\mathcal I$ are a subset of $\{v_1,\ldots,v_r\}$. The Rees valuations can be a proper subset of 
$\{v_1,\ldots,v_r\}$. Theorem \ref{Theorem3} shows that if $R$ is essentially of finite type over a field (in addition to our assumption that $R$ is normal and excellent) then the analytic spread $\ell(\mathcal I)=d$ if and only if there exists a Rees valuation $v$ of $\mathcal I$ such that $\mbox{center}_R(v)=m_R$.

We have the natural birational maps 
$$
\mbox{Proj}(R[I_nt^n])\dashrightarrow \mbox{Proj}(R[\mathcal I])
$$
for all $n>0$. The nature of these birational maps (and hence the structure of $\mbox{Proj}(R[\mathcal I])$) is determined by the respective Rees valuations of $\mbox{Proj}(R[I_nt^n])$ and $\mbox{Proj}(R[\mathcal I])$.

This raises the question of what asymptotic properties the Rees valuations of $I_n$ have (as $n$ goes to infinity) and how this compares with the Rees valuations of $\mathcal I$.

If $I$ is an ideal in an excellent local ring, then we have seen in Theorem \ref{ReesThm} that the filtration of powers of integral closures $\{\overline{I^n}\}$ of the powers of $I$ is a $\ZZ$-divisorial filtration. In particular, the set of all Rees valuations of all of the ideals $I^m$ over all $m$ is a finite set. It is just the set of Rees valuations of $I$ itself. 
In Section \ref{SecSym} we consider the question of if this set is finite for the ideals in a $\ZZ$-divisorial filtration; that is, is the set of all Rees valuations of the ideals $I_n$ finite when $\mathcal I=\{I_n\}$ is a $\ZZ$-divisorial filtration? We show that this is not the case.
In Section \ref{SecSym} we construct an example of a height one prime ideal $P$ in a two dimensional normal local ring $R$ such that for all $n\ge 1$, the symbolic power $P^{(n)}$ has a Rees valuation other then the $PR_P$-adic  valuation $v_P$, but for  $1\le m<n$, $v_P$ is the only common Rees valuation of $P^{(m)}$ 
and $P^{(n)}$. In particular, the set of Rees valuations of $P^{(n)}$ over all  $n\ge 1$ is infinite. 

The only Rees valuation of the divisorial filtration $\{P^{(n)}\}$ is $v_P$ and the analytic spread of this filtration is
$\ell(\{P^{(n)}\})=0$. In contrast, the analytic spread of each individual ideal $P^{(n)}$ is maximal, with $\ell(P^{(n)})=2$. 
None of the Rees valuations of the $P^{(n)}$ which have center $m_R$ on $R$ have a center on $\mbox{Proj}(R[\mathcal I])=\mbox{Spec}(R)\setminus\{m_R\}$. Necessarily, $R[\mathcal I]$ is not Noetherian.

\subsection{Acknowledgements} The author would like to thank Hailong Dao and Jonathan Monta\~no  for discussions and questions about Hilbert polynomials of filtrations and to thank Linquin Ma,  Thomas Polstra and  Karl Schwede     for a discussion about the cardinality (finite or infinite) of the set of Rees valuations of the symbolic powers of a prime ideal. He thanks Suprajo Das for suggesting the use of alterations to generalize the assumption that rings have equicharacteristic zero in 
\cite[Theorem 1.3]{SDCProc}. He thanks the reviewer for helpful comments to improve the exposition. 

\section{Valuations, Filtrations and Rees Algebras}

A local ring will be assumed to be Noetherian.
We will denote the maximal ideal of a local ring $R$ by $m_R$. The length of an $R$-module $M$ will be denoted by $\lambda_R(M)$ or $\lambda(M)$.

\subsection{Extensions of valuations and Rees algebras}

Let $K$ be a field  and $v$ be a rank 1 valuation of $K$ (the value group $vK$ of $v$ is order isomorphic to a subgroup of $\RR$). For $\lambda\ge 0$ in the value group $vK$, let 
$$
I(v)_{\lambda} = \{x\in K\mid v(c)\ge \lambda\},
$$
an ideal in the valuation ring $\mathcal O_v$ of $v$.
Let $R$ be a domain with quotient field $K$ such that $v$ is nonnegative on $R$. The center of $v$ on $R$ is the prime ideal 
$$
\mbox{center}_R(v)=m_v\cap R,
$$
 where $m_v$ is the maximal ideal of $\mathcal O_v$. Let $P=\mbox{center}_R(v)$. 
 We have that  $\sqrt{I(v)_{\lambda}\cap R}
=P$ for all $\lambda\in vK_{>0}$ and in fact $I(v)_{\lambda}\cap R$ is $P$-primary, since the inclusion $R\rightarrow \mathcal O_v$ factors through $R_P$ and $m_v\cap R_P=PR_P$.

The following lemma is immediate. 

\begin{Lemma}\label{LemmaA}
Let $K\rightarrow L$ be a finite extension of fields, $v_1,\ldots,v_r$ be rank 1 valuations of $K$, $\lambda_i\in v_iK$  and $\{w_{ij}\}$ be the extensions of $v_i$ to $L$ for $1\le i\le r$. 
Then 
$$
(\cap_i\cap_j I(w_{ij})_{\lambda_i})\cap K=\cap_iI(v_i)_{\lambda_i}.
$$
\end{Lemma}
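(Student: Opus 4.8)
The plan is to reduce at once to the case $r=1$ and then just unwind the definitions, the only subtlety being the normalization convention for extensions of valuations. Since contraction to $K$ commutes with arbitrary intersections of subsets of $L$, we have
$$
\left(\bigcap_i\bigcap_j I(w_{ij})_{\lambda_i}\right)\cap K=\bigcap_i\left(\left(\bigcap_j I(w_{ij})_{\lambda_i}\right)\cap K\right),
$$
so it suffices to show, for each fixed $i$, that $\left(\bigcap_j I(w_{ij})_{\lambda_i}\right)\cap K=I(v_i)_{\lambda_i}$. Fix such an $i$ and abbreviate $v=v_i$, $\lambda=\lambda_i$, and let $w_1,\dots,w_s$ denote the extensions of $v$ to $L$; there is at least one such extension (every valuation of $K$ extends to $L$) and, since $L/K$ is finite, only finitely many.

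The key fact I would invoke is that each $w_k$ restricts to $v$ on $K$, so that $vK\subseteq w_kL$ (in particular $\lambda\in w_kL$, so each $I(w_k)_\lambda$ is defined) and $w_k(x)=v(x)$ for every $x\in K$. Granting this, both inclusions are immediate. For $\supseteq$: if $x\in I(v)_\lambda$ then $x\in K$ and $v(x)\ge\lambda$, hence $w_k(x)=v(x)\ge\lambda$ for all $k$, so $x\in\bigcap_k I(w_k)_\lambda$ and $x$ lies in the left-hand side. For $\subseteq$: if $x\in\left(\bigcap_k I(w_k)_\lambda\right)\cap K$, then $x\in K$ and, choosing any single extension $w_1$, we get $v(x)=w_1(x)\ge\lambda$, so $x\in I(v)_\lambda$. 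Combining the two inclusions and ranging over $i$ gives the asserted equality.

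There is essentially no obstacle here; the one point to keep straight is the convention that an extension $w_{ij}$ of $v_i$ to $L$ restricts \emph{exactly} to $v_i$ on $K$ (rather than merely to an equivalent valuation), which is what allows the thresholds $\lambda_i$ to be transported between $K$ and $L$ without rescaling. One should also note that the argument does not need all the extensions $w_{ij}$: a single extension of each $v_i$ already computes the contraction, so the finiteness of the family $\{w_{ij}\}$ (coming from $[L:K]<\infty$) is a convenience rather than a necessity.
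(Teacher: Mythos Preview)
Your proof is correct and is exactly the unwinding of definitions that makes the lemma immediate; the paper itself gives no argument beyond declaring the result ``immediate,'' so there is nothing to compare. Your remark that a single extension of each $v_i$ already suffices for the $\subseteq$ inclusion is a valid side observation, and your attention to the convention that $w_{ij}|_K=v_i$ exactly (rather than up to equivalence) is the right point to flag---the paper uses this same convention, as visible for instance in the proof of Proposition~\ref{PropD} where $v_i(s_b)=w_{i,1}(s_b)$.
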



\begin{Lemma}\label{LemmaB} Let $K\rightarrow L$ be a finite field extension, $R$ be a normal Noetherian domain with quotient field $K$ and $S$ be the integral closure of $R$ in $L$. 
Let $v$ be a valuation of $K$ which is nonnegative on $R$, and $w$ be an extension of $v$ to $L$. Then $w$ is nonnegative on $S$ and 
$$
\dim S/\mbox{center}_S(w)=\dim R/\mbox{center}_R(v).
$$
In particular, $\mbox{center}_R(v)$ is a maximal ideal of $R$ if and only of $\mbox{center}_S(w)$ is a maximal ideal of $S$.
\end{Lemma}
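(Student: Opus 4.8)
The plan is to reduce the statement to the Cohen--Seidenberg theory of integral extensions, after checking two preliminary points: that $w$ is nonnegative on $S$, and that the center of $w$ on $S$ contracts to the center of $v$ on $R$.

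First I would show $w\ge 0$ on $S$. Given $s\in S$, integrality over $R$ yields a monic relation $s^n+a_{n-1}s^{n-1}+\cdots+a_0=0$ with all $a_i\in R$, hence $w(a_i)=v(a_i)\ge 0$. If $w(s)<0$, then for every $i<n$ we have $w(a_is^i)=w(a_i)+iw(s)\ge iw(s)\ge (n-1)w(s)>nw(s)=w(s^n)$, so $s^n$ is the unique term of strictly smallest value in the displayed sum; the ultrametric inequality then forces the value of the sum to be $nw(s)\ne\infty$, contradicting that the sum is $0$. Hence $w(s)\ge 0$, so $w$ is nonnegative on $S$ and $\mbox{center}_S(w)=m_w\cap S$ is defined.

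Next set $\mathfrak p=\mbox{center}_R(v)=m_v\cap R$ and $\mathfrak q=\mbox{center}_S(w)=m_w\cap S$. Since $w$ restricts on $K$ to a valuation with valuation ring $\mathcal O_v$, we have $m_w\cap K=m_v$, so $\mathfrak q\cap R=(m_w\cap S)\cap R=m_w\cap R=(m_w\cap K)\cap R=m_v\cap R=\mathfrak p$. Thus $\mathfrak q$ lies over $\mathfrak p$ in the integral extension $R\subset S$, and $R/\mathfrak p\hookrightarrow S/\mathfrak q$ exhibits $S/\mathfrak q$ as a domain integral over the domain $R/\mathfrak p$.

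The main point is then the classical fact that an integral extension of domains has the same Krull dimension: by lying over and going up, any chain of primes in $R/\mathfrak p$ lifts to a chain of equal length in $S/\mathfrak q$, while incomparability shows no chain of primes in $S/\mathfrak q$ can exceed the length of its contraction to $R/\mathfrak p$; hence $\dim S/\mathfrak q=\dim R/\mathfrak p$. For the final assertion, $\mathfrak p$ is maximal in $R$ iff the domain $R/\mathfrak p$ is a field iff $\dim R/\mathfrak p=0$, and similarly $\dim S/\mathfrak q=0$ iff $\mathfrak q$ is maximal in $S$, so the two maximality conditions are equivalent by the dimension equality just established. I do not anticipate a genuine obstacle; the only things to watch are the convention that ``extension of $v$'' means $w|_K$ has valuation ring $\mathcal O_v$ (which is exactly what makes the centers contract correctly), and the observation that the finiteness of $S$ over $R$ is not really needed beyond integrality for this particular lemma, though it does no harm to assume it.
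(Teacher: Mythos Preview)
Your proof is correct and follows essentially the same approach as the paper: the paper simply observes that $m_w$ contracts to $m_v$, hence $\mbox{center}_S(w)$ contracts to $\mbox{center}_R(v)$, and then invokes Corollary~A.8 of Bruns--Herzog for the dimension equality under a finite (integral) extension. You spell out both the verification that $w\ge 0$ on $S$ (which the paper leaves implicit) and the Cohen--Seidenberg argument behind the cited reference, and your remark that only integrality is needed for the dimension statement is accurate.
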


\begin{proof}  The maximal ideal $m_w$ of $\mathcal O_w$ contracts to the maximal ideal $m_v$ of $\mathcal O_v$ so $\mbox{center}_S(w)$ contracts to $\mbox{center}_R(v)$. Thus the lemma holds since $S$ is integral over $R$.
\end{proof}

For ideals $U$ and $V$ of a  ring $R$, define 
$$
U:V^{\infty}=\cup_{n>0}U:V^n=\{f\in R\mid fV^n\subset U\mbox{ for some $n>0$}\}.
$$
If $R$ is Noetherian, 
$U:V^{\infty}$ is obtained from $U$ by removing from a primary decomposition of $U$ the $P$-primary components that have the property that $J(R)\subset P$.

Let $R$ be a normal Noetherian semi local domain with quotient field $K$, $v_1,\ldots,v_r$ be rank 1 valuations of $K$ which are nonnegative on $R$ and $\lambda_i\in v_iK_{\ge 0}$ for $1\le i\le r$.
Let $J(R)$ be the intersection of the maximal ideals of $R$.  

We have that 
$$
\left(\cap_{i=1}^r I(v_i)_{\lambda_i}\right)\cap R=\cap_{i=1}^r(I(v_i)_{\lambda_i}\cap R).
$$
Let $P_j$ for $1\le j\le s$ be the distinct centers of the $v_i$ on $R$, and define $\sigma(i)$ by $\mbox{center}_R(v_i)=P_{\sigma(i)}$ for $1\le i\le r$. As observed before Lemma \ref{LemmaA}, $I(v_i)_{\lambda_i}\cap R$ is $P_{\sigma(i)}$-primary for all $i$. Let $Q_j=\cap_{\sigma(i)=j}(I(v_i)_{\lambda_i}\cap R)$, which is a $P_j$-primary ideal since it is an intersection of $P_j$-primary ideals. Thus $\cap_{j=1}^s Q_j$ is a primary decomposition of $\cap_{i=1}^r(I(v_i)_{\lambda_i}\cap R)$. As remarked above, $(\cap_{j=1}^sQ_j):J(R)^{\infty}$ removes from $\cap_{j=1}^s$ the primary components from the maximal ideals of $R$.

Thus 
\begin{equation}\label{eqN4}
\left[\cap_{i\in \Lambda}I(v_i)_{\lambda_i}\right]\cap R
=\left(\left[\cap_{i=1}^rI(v_i)_{\lambda_i}\right]\right)\cap R:J(R)^{\infty},
\end{equation}
where 
$$
\Lambda=\{i\mid \mbox{center}_R(v_i)\mbox{ is not a maximal ideal of $R$}\}.
$$

The following lemma follows from Lemma \ref{LemmaA}, Lemma \ref{LemmaB} and equation (\ref{eqN4}).

\begin{Lemma}\label{LemmaC} 
Let $K\rightarrow L$ be a finite field extension, $R$ be a normal Noetherian domain with quotient field $K$ and $S$ be the integral closure of $R$ in $L$.  Suppose that $S$ is a finite $R$-module. Let  $v_1,\ldots,v_r$ be rank 1 valuations of $K$, $\lambda_i\in v_iK$ be nonnegative  and $\{w_{ij}\}$ be the extensions of $v_i$ to $L$ for $1\le i\le r$. Suppose that $v_1,\ldots,v_r$ are nonnegative on $R$.
Then
$$
\left(\left[\cap I(w_{ij})_{\lambda_i}\right]\cap S:J(S)^{\infty}\right)\cap R=
\left[\cap I(v_i)_{\lambda_i}\right]\cap R:J(R)^{\infty}.
$$
 In particular, 
 $$
 \left[\cap I(v_i)_{\lambda_i}\right]\cap R:J(R)^{\infty}\ne  \left[\cap I(v_i)_{\lambda_i}\right]\cap R
 $$
  implies
 $$
 \left[\cap I(w_{ij})_{\lambda_i}\right]\cap S:J(S)^{\infty}\ne  \left[\cap I(w_{ij})_{\lambda_i}\right]\cap S.
 $$
 \end{Lemma}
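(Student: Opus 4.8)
The plan is to deduce the identity from Lemma \ref{LemmaA}, from Lemma \ref{LemmaB}, and from the displayed identity for saturation by the Jacobson radical that immediately precedes the statement, applied once to $R$ and once to $S$. Abbreviate $A=\left[\cap_{i,j}I(w_{ij})_{\lambda_i}\right]\cap S$ and $B=\left[\cap_iI(v_i)_{\lambda_i}\right]\cap R$, ideals of $S$ and $R$ respectively; the claim to be shown is $(A:J(S)^\infty)\cap R=B:J(R)^\infty$, and then the ``in particular'' will be a one-line consequence. The first, purely formal, point is that $A\cap R=B$: since $R$ is normal it is integrally closed in $K$, and $S$ is integral over $R$, so $S\cap K=R$; hence, using $R\subseteq S$ and then $R\subseteq K$, $A\cap R=\left[\cap_{i,j}I(w_{ij})_{\lambda_i}\right]\cap R=\left(\left[\cap_{i,j}I(w_{ij})_{\lambda_i}\right]\cap K\right)\cap R=\left(\cap_iI(v_i)_{\lambda_i}\right)\cap R=B$ by Lemma \ref{LemmaA}. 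Exactly the same computation, applied to a subfamily of the $v_i$ together with their extensions, will be reused below.

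Next I would compute $A:J(S)^\infty$. Since $S$ is module-finite over $R$ it is again a normal semi-local domain (normality by transitivity of integral closure over a normal domain), so the displayed identity applies to $S$ and the extensions $w_{ij}$ (each $w_{ij}$ is nonnegative on $S$ by Lemma \ref{LemmaB}, and $\lambda_i\in w_{ij}L$ since $w_{ij}$ extends $v_i$): it gives $A:J(S)^\infty=\left[\cap_{(i,j)\in\Lambda'}I(w_{ij})_{\lambda_i}\right]\cap S$, where $\Lambda'$ consists of the pairs $(i,j)$ for which $\mathrm{center}_S(w_{ij})$ is not a maximal ideal of $S$. This is where Lemma \ref{LemmaB} enters: $\mathrm{center}_S(w_{ij})$ is a maximal ideal of $S$ if and only if $\mathrm{center}_R(v_i)$ is a maximal ideal of $R$, so $(i,j)\in\Lambda'$ precisely when $i\in\Lambda$, with $\Lambda=\{i:\mathrm{center}_R(v_i)\text{ not maximal in }R\}$ the set appearing in the displayed identity for $R$. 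Hence $A:J(S)^\infty=\left[\cap_{i\in\Lambda}\cap_jI(w_{ij})_{\lambda_i}\right]\cap S$.

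Finally I would intersect with $R$ and run the first step in reverse: by Lemma \ref{LemmaA} applied to $\{v_i\}_{i\in\Lambda}$ and their extensions, $(A:J(S)^\infty)\cap R=\left[\cap_{i\in\Lambda}\cap_jI(w_{ij})_{\lambda_i}\right]\cap R=\left(\cap_{i\in\Lambda}I(v_i)_{\lambda_i}\right)\cap R$, and the displayed identity for $R$ identifies this last ideal with $B:J(R)^\infty$; chaining the equalities gives the assertion. For the ``in particular'': if $B:J(R)^\infty\neq B$ while $A:J(S)^\infty=A$, then intersecting the latter with $R$ and using $A\cap R=B$ together with the identity just proved forces $B:J(R)^\infty=B$, a contradiction. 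I do not expect a serious obstacle here; the two points needing care are verifying that $S$ is genuinely a normal semi-local domain to which the displayed identity may be invoked upstairs, and the index bookkeeping that matches $\Lambda'$ with $\Lambda$ through Lemma \ref{LemmaB}.
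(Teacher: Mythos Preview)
Your proposal is correct and is precisely the natural unpacking of the paper's terse ``follows from Lemma~\ref{LemmaB}'': you use Lemma~\ref{LemmaA} to contract the valuation ideals from $L$ to $K$, the displayed saturation identity once on $S$ and once on $R$, and Lemma~\ref{LemmaB} to match the index sets $\Lambda'$ and $\Lambda$. The one caveat you flag---that the displayed identity requires a semi-local ring and the lemma only hypothesizes $R$ normal Noetherian---is real, but harmless in the paper's applications where $R$ is local (hence $S$ is semi-local by finiteness).
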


  Let $K$ be a field  and $v_1,\ldots,v_r$ be rank 1 valuations of $K$ and $\lambda_i\ge 0$ be elements of $v_iK$ for $1\le i\le r$.
  Let $R$ be a normal semilocal domain with quotient field $K$ such that the $v_i$ are nonnegative on $R$ for all $i$. Define a graded filtration $\mathcal I(R)=\{I(R)_n\}$ of $R$ by $I(R)_n=(\cap_iI(v_i)_{n\lambda_i})\cap R$. Suppose that $K\rightarrow L$ is a finite field extension. Let $\{w_{ij}\}$ be the extensions of $v_i$ to $L$ for $1\le i\le r$. Let $S$ be the integral closure of $R$ in $L$. 
 Define a graded filtration
   \begin{equation}\label{eq5}\mbox{ $\mathcal I(S)=\{I(S)_n\}$ on $S$ by 
  $I(S)_n=(\cap_i\cap_jI(w_{ij})_{n\lambda_i})\cap S$.}
  \end{equation}
  
 

  \begin{Proposition}\label{PropD}  Let $L$ be a finite field extension of $K$ and $S$ be the integral closure of $R$ in $L$.     Then $S[\mathcal I(S)]$ is integral over $R[\mathcal I(R)]$.
  \end{Proposition}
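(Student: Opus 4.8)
The plan is to show integrality in two stages: first that $S$ is integral over $R$ (which is given, since $S$ is the integral closure of $R$ in the finite extension $L$, hence a finite $R$-module), and then that each graded piece $I(S)_n$ consists of elements integral over the subring $R[\mathcal I(R)]$ when viewed inside $S[\mathcal I(S)]$. Concretely, I would fix a homogeneous element $x \in I(S)_n$ sitting in degree $n$ of $S[\mathcal I(S)]$ and produce a monic equation for $x$ over $R[\mathcal I(R)]$. Since $x \in S$ and $S$ is integral over $R$, there is a monic polynomial $x^m + a_{m-1}x^{m-1} + \cdots + a_0 = 0$ with $a_i \in R$. The point is to homogenize this: I claim $a_i \in I(R)_{(m-i)n}$ for each $i$, so that $a_i t^{(m-i)n}$ (writing $t$ for the grading variable, i.e. thinking of $a_i$ as living in degree $(m-i)n$ of $R[\mathcal I(R)]$) is a legitimate element of $R[\mathcal I(R)]$, and the homogenized equation exhibits $xt^n$ as integral over $R[\mathcal I(R)]$.

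The heart of the argument is therefore the valuation-theoretic estimate $a_i \in I(R)_{(m-i)n}$. For this I would use that $a_i$ is, up to sign, the $i$-th elementary symmetric function of the conjugates of $x$ over $K$ (after passing to a normal closure, or more carefully using that the $a_i$ are polynomials in the power sums / symmetric functions of the Galois conjugates; the integral dependence equation has coefficients that are $R$-linear combinations of products of conjugates of $x$). For each valuation $v_i$ of $K$ and each extension $w_{ij}$ to $L$, we have $w_{ij}(x) \ge n\lambda_i$ since $x \in I(S)_n \subseteq I(w_{ij})_{n\lambda_i}$; moreover every conjugate of $x$ over $K$ also has $w$-value $\ge n\lambda_i$ for every extension $w$ of $v_i$, because conjugate valuations are permuted transitively by the Galois action and the value $n\lambda_i$ is fixed. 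Hence each coefficient $a_i$, being a sum of products of $m-i$ conjugates of $x$, satisfies $v_i(a_i) = w(a_i) \ge (m-i)n\lambda_i$ for the relevant extensions, and since $a_i \in R$ this gives $a_i \in I(v_i)_{(m-i)n\lambda_i} \cap R$ for all $i$, i.e. $a_i \in I(R)_{(m-i)n}$. This is exactly the content of Lemma \ref{LemmaA} (and its consequences) applied coefficientwise.

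A cleaner packaging avoids explicit symmetric functions: take the monic minimal polynomial $p(T) \in K[T]$ of $x$ over $K$; since $R$ is normal (integrally closed in $K$) and $x$ is integral over $R$, all coefficients of $p$ lie in $R$. Over a normal closure $M$ of $L/K$, $p$ splits as $\prod_\ell (T - x_\ell)$ with the $x_\ell$ the conjugates of $x$. Extend each $v_i$ to a valuation $\tilde v$ of $M$; the restrictions of $\tilde v$ to $L$ are among the $w_{ij}$, and all conjugates $x_\ell$ are obtained from $x$ by $K$-automorphisms, which move $\tilde v$ among its conjugate extensions while preserving values on $K$, so $\tilde v(x_\ell) \ge n\lambda_i$ for all $\ell$. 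Reading off the coefficients of $p$ then yields the valuation bounds above. Finally, to go from "$xt^n$ is integral over $R[\mathcal I(R)]$ for each homogeneous $x$" to "$S[\mathcal I(S)]$ is integral over $R[\mathcal I(R)]$", I would note that $S[\mathcal I(S)]$ is generated as an $R[\mathcal I(R)]$-algebra by $S$ together with the homogeneous elements of $S[\mathcal I(S)]$, that $S$ itself is a finite (hence integral) $R[\mathcal I(R)]$-module in degree zero, and that a ring generated over a base by integral elements is integral over that base.

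The step I expect to be the main obstacle is the uniform valuation bound on the conjugates of $x$ — specifically, making precise that passing to a normal closure and tracking how the extensions $w_{ij}$ of $v_i$ behave under the Galois action really does give $\tilde v(x_\ell) \ge n\lambda_i$ for every conjugate $x_\ell$ and not merely for $x$ itself. Once that is in hand, the homogenization of the integral equation and the passage from generators to the whole algebra are routine, and Lemma \ref{LemmaA} handles the bookkeeping of intersecting back down to $R$.
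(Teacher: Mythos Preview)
Your argument is correct and is essentially the paper's proof: produce an integral equation for $xt^n$ over $R[\mathcal I(R)]$ by taking the polynomial $\prod_\ell (T - x_\ell)$ over the conjugates of $x$, use the action of $K$-automorphisms on the extensions of each $v_i$ to bound $v_i$ of every coefficient by $bn\lambda_i$, and then homogenize. The only organizational difference is that the paper first reduces via the tower $K \subset N^G \subset N$ to the two cases $L/K$ Galois and $L/K$ purely inseparable and treats them separately, whereas you work directly in a normal closure and handle both cases at once.
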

  
  \begin{proof}
 Let $N$ be a finite extension of $L$ such that $N$ is a normal extension of $K$. Let $G=\mbox{Aut}_K(N)$. We have a tower of fields
 $K\rightarrow N^G\rightarrow N$ where $N^G$ is the fixed field of $N$ under the action of $G$. The extension $K\rightarrow N^G$ is purely inseparable and $N^G\rightarrow N$ is Galois with Galois group $G$ (c.f.  \cite[Proposition V.6.11]{La}).
 
 Let $A$ be the integral closure of $R$ in $N^G$ and $B$ be the integral closure of $R$ in $N$. To prove the proposition, it suffices to show that $B[\mathcal I(B)]$ is integral over $R[\mathcal I(R)]$, and to prove this it suffices to prove that $B[\mathcal I(B)]$ is integral over $A[\mathcal I(A)]$ and $A[\mathcal I(A)]$ is integral over $R[\mathcal I(R)]$. Thus we are reduced to proving the Proposition in two cases: when $L$ is Galois over $K$ and when $L$ is purely inseparable over $K$.
 
 First suppose that $L$ is Galois over $K$, with Galois group $G$. If $x\in S^G$, then $x\in K$, and since $x$ is integral over $R$, $x\in R$. Thus $R=S^G$. Let 
 $$
 G=\{\sigma_1={\rm id}, \sigma_2,\ldots,\sigma_a\}.
 $$
 
 Let $x\in S$. Let  $f(X)=\prod_{i=1}^a(X-\sigma_i(x))\in K[X]$. We have that $f(x)=0$. Expand 
 \begin{equation}\label{eq1}
 f(X)=X^a-s_1X^{a-1}+s_2X^{a-2}+\cdots +(-1)^as_a
 \end{equation}
  where $s_i$ are the elementary symmetric polynomials in $\sigma_1(x),\ldots,\sigma_a(x)$. The $s_i$ are integral over $R$, and are contained in $K$, so that the $s_i$ are in $R$ since $R$ is normal.    
    
 Suppose  that $x\in I(S)_n\subset S$. Then 
 \begin{equation}\label{eq2}
 w_{i,j}(x)\ge n\lambda_i
 \end{equation}
  for all $i,j$. There exists a permutation $\tau_i$ of $\{1,\ldots,r\}$ such that 
  \begin{equation}\label{eq3}
  w_{i,1}\sigma_j=w_{i,\tau_i(j)}
    \end{equation}
   for all $j$ (\cite[Corollary VI.7.3]{ZS2}). For $b\in \{1,\ldots, a\}$, we compute 
 $$
 v_i(s_b)=w_{i,1}(s_b)\ge \min\{w_{i,1}(\sigma_{\alpha_1}(x))+\cdots+w_{i,1}(\sigma_{\alpha_b}(x))\}
 $$
 where the minimum is over $\alpha_1<\alpha_2<\cdots<\alpha_b$. Thus
\begin{equation}\label{eq4}
 v_i(s_b)\ge bn\lambda_i
 \end{equation}
 by (\ref{eq2}) and  (\ref{eq3}), so that $s_b\in I(R)_{bn}$. 
 
 Write $R[\mathcal I(R)]\cong \sum_{i=0}^{\infty}I(R)_nt^n$, a graded subring of the graded polynomial ring $R[t]$. Let $g(X)=\prod_{i=1}^a(X-\sigma_i(x)t^n)$. 
 Expand
 $$
 g(X)=X^a-s_1t^{n}X^{a-1}+s_2t^{2n}X^{ra2}+\cdots +(-1)^as_a t^{an}.
 $$
 We have that $(-1)^bs_bt^{bn}\in I(R)_{bn}t^{bn}$ for all $b$ by (\ref{eq4}). Thus $g(X)\in R[\mathcal I(R)][X]$.  Since
 $$
 g(xt^n)=\prod_{i=1}^a(xt^n-\sigma_i(x)t^n)=t^{an}\prod_{i=1}^a(x-\sigma_i(x))=0,
 $$
 this is an equation of integral dependence of $xt^n$ over $R[\mathcal I(R)]$. Thus $S[\mathcal I(S)]$ is integral over $R[\mathcal I(R)]$ in the case that $L$ is Galois over $K$.
 
 Now suppose that $L$ is purely inseparable over $K$. Let $[L:K]=p^m$ where $p$ is the characteristic of $K$. If $x\in L$, we have that $x^{p^m}\in K$.
 There is a unique extension $w_i$ of $v_i$ to $L$. This valuation satisfies $w_i(x)=\frac{1}{p^m}v_i(x^{p^m})$ for $x\in L$. Thus $x\in I(S)_n$ implies $x^{p^m}\in I(R)_{np^m}$ and so $(xt^n)^{p^m}\in I(R)_{np^m}t^{np^m}$. Let $g(X)=X^{p^m}-(xt^n)^{p^m}\in R[\mathcal I(R)][X]$. We have that $g(xt^n)=0$, giving an equation of integral dependence of $xt^n$ over $R[\mathcal I(R)]$. Thus $S[\mathcal I(S)]$ is integral over $R[\mathcal I(R)]$ in the case that $L$ is purely inseparable over $K$.
 
\end{proof} 

\subsection{Divisorial Filtrations}\label{SecDiv}

 A divisorial valuation of a Noetherian domain $R$ (\cite[Definition 9.3.1]{HS}) is a valuation $\nu$ of $K$ such that if $\mathcal O_{\nu}$ is the valuation ring of $\nu$ with maximal ideal $\mathfrak m_{\nu}$, then $R\subset \mathcal O_{\nu}$ and if $\mathfrak p=\mathfrak m_{\nu}\cap R$ then $\mbox{trdeg}_{\kappa(\mathfrak p)}\kappa(\nu)={\rm ht}(\mathfrak p)-1$, where $\kappa(\mathfrak p)$ is the residue field of $R_{\mathfrak p}$ and $\kappa(\nu)$ is the residue field of $\mathcal O_{\nu}$. If $R$ is a local ring and $\nu$ is a divisorial valuation of $R$ such that $m_R=m_{\nu}\cap R$, then $\nu$ is called an $m_R$-valuation.
 
 By \cite[Theorem 9.3.2]{HS}, the valuation ring of every divisorial valuation $\nu$ is Noetherian, hence is a  discrete valuation. 
 Suppose that  $R$ is an excellent local domain. Then a valuation $\nu$ of the quotient field $K$ of $R$ which is nonnegative on $R$ is a divisorial valuation of $R$ if and only if the valuation ring $\mathcal O_{\nu}$  is essentially of finite type over $R$ (\cite[Lemma 6.1]{CPS1} or \cite[Theorem 9.3.2]{HS}).
 
 Let $R$ be a Noetherian domain and $v$ be a discrete valuation of the quotient field of $R$ which is nonnegative on $R$. For $\lambda\in \RR_{\ge 0}$, define the valuation ideal of $R$
 $$
 I_R(v)_{\lambda}=\{f\in R\mid v(f)\ge \lambda\}=I(v)_{\lambda}\cap R.
 $$
 
A divisorial filtration of a Noetherian  domain $R$ is a filtration $\mathcal I=\{I_n\}$ where there exist divisorial valuations $v_1,\ldots,v_r$ of $R$ and 
$\lambda_1,\ldots,\lambda_r\in \RR_{\ge 0}$ such that 
$$
I_n=I_R(v_1)_{n\lambda_1}\cap \cdots\cap I_R(v_r)_{n\lambda_r}=I(v_1)_{n\lambda_1}\cap \cdots\cap I(v_r)_{n\lambda_r}\cap R.
$$
We now weaken our assumptions on $R$ and 
suppose that $R$ is an arbitrary Noetherian local ring. 

\begin{Definition} Let $R$ be a Noetherian  local ring and $P_1,\ldots,P_t$ be the minimal prime ideals of $R$. Let $\phi_i:R\rightarrow R/P_i$ be the natural projection  for $1\le i\le t$. Let $\{v_{ij}\}$ be divisorial valuations on $R/P_i$ and  $\lambda_{ij}\in \RR_{\ge 0}$. Let $\mathcal I=\{I_n\}$ where 
\begin{equation}\label{eq8}
I_n=\cap_{i=1}^t\cap_{j} \phi_i^{-1}(I_{R/P_i}(v_{ij})_{n\lambda_{ij}}).
\end{equation}
$\mathcal I$ is a graded filtration of $R$, which is called an $\RR$-divisorial filtration. If the $\lambda_{ij}$ are in $\QQ_{\ge 0}$ then $\mathcal I$ is called a $\QQ$-divisorial filtration and if the $\lambda_{ij}$ are in $\ZZ_{\ge 0}$ then $\mathcal I$ is called a divisorial filtration.
\end{Definition}

If $P$ is a prime ideal of a local ring $R$, $\phi:R\rightarrow R/P$ and $v$ is a valuation of the quotient field of $R/P$, then $w=v\circ\phi$ is a valuation of $R$, as defined in VI.3.1 \cite{Bou}. That is, by extending the value group $vR/P$ of $V$ by introducing the element $\infty$, which is larger than any element of $vR/P$, and letting $\Gamma_{\infty}=vR/P\cup\{\infty\}$, the mapping
$w:R\rightarrow \Gamma_{\infty}$ satisfies the properties
\begin{equation}\label{eq30}
\begin{array}{l}
w(xy)=w(x)+w(y)\mbox{ for }x,y\in R\\
w(x+y)\ge\min\{w(x),w(y)\}\mbox{ for }x,y\in R\\
w(1)=0\mbox{ and }w(P)=\infty
\end{array}
\end{equation}
With the notation of (\ref{eq8}), the mappings $v_{ij}\circ\phi_i$ are valuations of $R$, satisfying the conclusions of (\ref{eq30}).



Associated to a discrete valuation ring $V$ is a unique canonical valuation $v_V$ defined by $v_V(x)=n$ if $x\in m_V^n\setminus m_V^{n+1}$ for $x\in V$ and where $m_V$ is the maximal ideal of $V$. If $w$ is any valuation whose valuation ring is $V$, so that $w$ is an equivalent valuation to $v$, we have that $w=cv$ where $c=w(u)$ for $u$ a generator of $m_V$ (a uniformizing parameter of $V$).

Now suppose that $R$ is a normal excellent local domain. Let 
\begin{equation}\label{N1}
\pi:X\rightarrow \mbox{Spec}(R)
\end{equation}
be a birational projective morphism such that $X$ is normal. Let
\begin{equation}\label{N2}
D=\sum\lambda_iE_i
\end{equation}
be an effective $\RR$-Weil divisor on $X$ (so that all $\lambda_i\in \RR_{\ge 0}$), where the $E_i$ are prime divisors on $X$. Define
$$
\lceil nD\rceil=\sum \lceil n\lambda_i\rceil E_i,
$$
where $\lceil x\rceil$ is the round up of a real number $x$. Then for all $n$,
$I(nD)=\Gamma(X,\mathcal O_X(-\lceil nD\rceil))$ is an ideal of $R$ and $\mathcal I=\{I(nD)\}$ is a divisorial filtration of $R$.


 Let $v_1,\ldots,v_r$ be the canonical valuations of the valuation rings $\mathcal O_{X,E_i}$. Then 
\begin{equation}\label{N3}
\Gamma(X,\mathcal O_X(-\lceil nD\rceil))=I_R(v_1)_{n\lambda_1}\cap \cdots\cap I_R(v_r)_{n\lambda_r}.
\end{equation}

Let $\mathcal I=\{I_n\}$ be a divisorial filtration of $R$. By \cite[Remark 6.6 to Lemma 5.5]{CPS1}, there exists $\pi:X\rightarrow \mbox{Spec}(R)$ and $D$ as in (\ref{N1}) and (\ref{N2}) such that $I_n=I(nD)$ for all $n\ge 0$.

A choice of $X$ and $D$ such that $I_n=I(nD)$ for all $n\ge 0$ is called a representation of the divisorial filtration $\mathcal I$. The choice of $X$ is not unique, and the choice of $D$ on a particular $X$ may not be unique. It is possible for a particular $\mathcal I$ to have representations (\ref{N3}) which are $\QQ$-divisorial and also have representations (\ref{N3}) which are not $\QQ$-divisorial.

Let $K$ be the quotient field of $R$, $K\rightarrow L$ be a finite extension of fields  and $\{w_{ij}\}$ be the extensions of $v_i$ to $L$ for $1\le i\le r$. 
Let $S$ be the normalization of $R$ in $L$ and $Y$ be the normalization of $X$ in $L$, with induced morphism $\phi:Y\rightarrow X$. 

Let $\{w_{ij}\}$ be the extensions of $v_i$ to $L$. Let $u_{i,j}$ be the canonical valuation of $\mathcal O_{w_{ij}}$, that is, a uniformizing parameter $t$ of $\mathcal O_{w_{i,j}}$ satisfies $u_{i,j}=1$. There exists a prime divisor $F_{ij}$ on $Y$ such that $\mathcal O_{w_{ij}}=\mathcal O_{F_{ij}}$. Write $w_{ij}=d_{ij}u_{ij}$.  Let $D'=\sum_{i,j}\frac{\lambda_i}{d_{ij}}F_{ij}$. Then

\begin{equation}\label{eq6}
I(D')_n=\cap_i\cap_jI(u_{ij})_{n\frac{\lambda_i}{d_{ij}}}\cap S=
\cap_i\cap_jI(w_{ij})_{n\lambda_i}\cap S
\end{equation}
for all $n\in \NN$.

\section{A generalized McAdam theorem}

In this section we prove a theorem (Theorem \ref{Theorem2})  which is the essential part of the proof of Theorem \ref{Theorem3} stated in the introduction.

\subsection{Resolutions and Alterations}

The proof of  \cite[Section 5]{SDCProc} shows the following.  

\begin{Theorem}(\cite[Section 5]{SDCProc})\label{Theorem1}
Let $R$ be a normal excellent local domain and let $\mathcal I=\{I_n\}$ be a $\QQ$-divisorial filtration on $R$. Suppose that there exists a projective birational morphism $\pi:X\rightarrow \mbox{Spec}(R)$ such that $X$ is nonsingular, and there exists an effective $\QQ$-divisor $D$ on $X$ such that $\mathcal I=\mathcal I(D)$.  If $m_R\in \mbox{Ass}(R/I_{m_0})$ for some $m_0\in \ZZ_{>0}$, then the analytic spread of $\mathcal I$ is $\ell(\mathcal I)=\dim R$. 
\end{Theorem}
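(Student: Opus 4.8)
The plan is to prove $\ell(\mathcal I)=\dim R$ by showing that the special fiber ring $F(\mathcal I):=R[\mathcal I]/m_RR[\mathcal I]$ has dimension $\geq\dim R$; since $\ell(\mathcal I)=\dim F(\mathcal I)$ and $\ell(\mathcal I)\leq\dim R$ by (\ref{eqZ1}), this suffices, and we may assume $d:=\dim R\geq 1$. Write $\mathcal I=\mathcal I(D)$ with $D=\sum_i\lambda_iE_i$ ($\lambda_i>0$), the $E_i$ prime divisors on the given nonsingular model $X$, and $v_i=\mbox{ord}_{E_i}$ the canonical valuations of $\mathcal O_{X,E_i}$, so that $I_n=\Gamma(X,\mathcal O_X(-\lceil nD\rceil))=\bigcap_i(I(v_i)_{n\lambda_i}\cap R)$.

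First I would extract a component of $D$ lying over $m_R$. Using the model $X$ one has $I(v_i)_\mu\cap R=\Gamma(X,\mathcal O_X(-\lceil\mu\rceil E_i))$, so $R/(I(v_i)_\mu\cap R)$ embeds into $\Gamma(X,\mathcal O_{\lceil\mu\rceil E_i})$; filtering the thickened divisor by $E_i\supseteq 2E_i\supseteq\cdots$ exhibits the latter as an iterated extension of submodules of modules $\Gamma(E_i,\mathcal L)$, $\mathcal L$ a line bundle on the integral scheme $E_i$, each of which is finite and torsion free over the domain $R/\mbox{center}_R(v_i)$. Hence $\mbox{Ass}_R(R/(I(v_i)_\mu\cap R))\subseteq\{\mbox{center}_R(v_i)\}$; since $R/I_{m_0}$ embeds into $\bigoplus_i R/(I(v_i)_{m_0\lambda_i}\cap R)$, the hypothesis $m_R\in\mbox{Ass}(R/I_{m_0})$ forces $m_R=\mbox{center}_R(v_{i_0})$, i.e. $\pi(E_{i_0})=\{m_R\}$, for some $i_0$. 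Moreover, fixing $x$ with $(I_{m_0}:x)=m_R$, from $(m_Rx)^j\subseteq I_{m_0}^j\subseteq I_{jm_0}$ and $x^j\notin I_{jm_0}$ (the $v_i$ have value group $\ZZ$) one sees that $(I_{jm_0}:x^j)$ is $m_R$-primary, so $m_R\in\mbox{Ass}(R/I_{jm_0})$ for every $j\geq 1$.

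Next I would pass to a leading–coefficient algebra along $E_{i_0}$. Replacing $\mathcal I$ by a Veronese $\{I_{kn}\}$ changes neither $\dim F(\mathcal I)$ nor, by the last remark, the validity of the hypothesis, so we may assume $D$ is integral. For $f\in I_n$ one has $\mbox{ord}_{E_{i_0}}(f)\geq n\lambda_{i_0}$; sending $f$ to the class in $\kappa(v_{i_0})$ of $f/t^{n\lambda_{i_0}}$ ($t$ a local equation of $E_{i_0}$), and to $0$ when the order is strictly larger, defines — because $D$ is integral — a graded homomorphism $R[\mathcal I]\to\kappa(v_{i_0})[T]$ with image a graded $\kappa(m_R)$-subalgebra $A=\bigoplus_nA_n\subseteq\kappa(v_{i_0})[T]$, $A_0=\kappa(m_R)$; one checks $m_RR[\mathcal I]$ lies in the kernel, so $F(\mathcal I)\twoheadrightarrow A$. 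Since $v_{i_0}$ is a divisorial valuation with center $m_R$, $\mbox{trdeg}_{\kappa(m_R)}\kappa(v_{i_0})=\dim R-1$, so $\mbox{Frac}(A)$ has transcendence degree $\leq d$ over $\kappa(m_R)$, and it is enough to exhibit a finitely generated graded $\kappa(m_R)$-subalgebra $B\subseteq A$ with $\mbox{trdeg}_{\kappa(m_R)}\mbox{Frac}(B)=d$: then $d=\dim B\leq\dim A\leq\dim F(\mathcal I)=\ell(\mathcal I)\leq d$.

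The remaining point — which is the heart of the matter, and where the associated–prime hypothesis does its real work (beyond merely producing $E_{i_0}$) — is to build such a $B$. Equivalently, one must show that the restricted section algebra $A$ of $\mathcal O_X(-D)$ along $E_{i_0}$ has dimension $d$, i.e. the restricted volume $\mbox{vol}_{X\mid E_{i_0}}(-D)$ is positive. I would derive this from the hypothesis via the element $x$: the inequality $\mbox{div}_X(r)\geq\lceil m_0D\rceil-\mbox{div}_X(x)$, valid for every $r\in m_R$, forces $\lceil m_0D\rceil-\mbox{div}_X(x)$ to be supported on, and dominated by, the exceptional divisors over $m_R$; after arranging $D$ and $\mbox{div}_X(x)$ in good relative position on $X$ and using that $E_{i_0}$ contracts to the closed point, products $r_1\cdots r_\ell\,x^j$ with $r_k\in m_R$ furnish sections of $I_{jm_0}$ attaining the minimal $v_{i_0}$-value whose leading coefficients along $E_{i_0}$ generate a subfield of $\kappa(v_{i_0})$ of transcendence degree $d-1$, and these, together with one element of positive degree, span the sought $B$. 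This translation of the ideal-theoretic hypothesis into positivity of $-D$ along the divisor over $m_R$ is the substantive part, and is what is carried out in Section 5 of \cite{SDCProc}; as the only use of equicharacteristic zero there was to produce the nonsingular model $X$ with $\mathcal I=\mathcal I(D)$ — here a hypothesis — the argument is characteristic free. Granting it, $\ell(\mathcal I)=\dim F(\mathcal I)\geq\dim A\geq\dim B=d$, and with (\ref{eqZ1}) we conclude $\ell(\mathcal I)=\dim R$.
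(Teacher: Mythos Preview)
The paper does not give a proof of this theorem: its entire ``proof'' is the sentence ``The proof of Section 5 of \cite{SDCProc} shows the following.'' Your proposal likewise defers the substantive step to Section 5 of \cite{SDCProc}, so the two are in agreement; your preliminary reductions (locating a component $E_{i_0}$ with center $m_R$ via the associated--prime hypothesis, propagating the hypothesis to $I_{jm_0}$, passing to a Veronese, and setting up the leading--coefficient algebra $F(\mathcal I)\twoheadrightarrow A\subset\kappa(v_{i_0})[T]$) are correct and are a reasonable unpacking of how such an argument would be organized, but there is nothing further in the present paper to compare them to. Your remark that the equicharacteristic--zero assumption in \cite{SDCProc} enters only through the existence of the nonsingular model is exactly the point the paper exploits: Theorem \ref{Theorem1} isolates that hypothesis, and the rest of Section \ref{Secproof} supplies the model via de Jong's alterations rather than resolution.
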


The following proposition is an  extension of of \cite[Section 6 ]{SDCProc}, with essentially the same proof.

\begin{Proposition}(\cite[Section 6]{SDCProc}\label{Prop1}
Let $R\rightarrow S$ be a finite extension of reduced rings where $R$ is  a Noetherian local ring of dimension $d$ and $S$ is a semi local ring. Let $\mathcal J=\{J_n\}$ be a graded filtration of $S$ and let $\mathcal I=\{I_n\}$ be the graded filtration of $R$ defined by $I_n=J_n\cap R$. For $q$ a prime ideal of $S$, let $\mathcal J_q$ be the graded filtration $\mathcal J_q=\{(J_n)_q\}$ of $S_q$. 
Suppose that  $m_R\in \mbox{Ass}_R(R/I_{m_0})$ for some $m_0>0$. 
Then there exists a maximal ideal $q$ of $S$ such that $qS_q\in \mbox{Ass}_{S_q}(S_q/(J_{m_0})_q)$. If we further have that  $R$ and $S$ are equidimensional of dimension $d$,
$\ell(\mathcal J_q)=d$ and there exists a nonzero divisor of $R$ in $M=\mbox{Ann}_R(S/R)$, then $\ell(\mathcal I)=d$.
\end{Proposition}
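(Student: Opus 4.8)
\emph{Overall structure and the first assertion.} I would prove the two assertions separately: the first is a transfer of the associated prime $m_R$ along $R\to S$, and the second is a comparison of the fibre cones of $R[\mathcal I]$ and $S[\mathcal J]$ over $m_R$ forced by the conductor. For the first assertion, write $m_R=(I_{m_0}:_R x)$ with $x\in R\setminus I_{m_0}$. Since $I_{m_0}=J_{m_0}\cap R$, the map $R/I_{m_0}\to S/J_{m_0}$ induced by $R\to S$ is injective, and $S/J_{m_0}$ is a finite $R$-module because $S$ is finite over $R$; hence the image $\bar x$ of $x$ in $S/J_{m_0}$ is nonzero and is killed by $m_RS$ (as $m_Rx\subseteq I_{m_0}\subseteq J_{m_0}$). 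Thus the nonzero submodule $S\bar x\subseteq S/J_{m_0}$ is a module over the Artinian ring $S/m_RS$, so it has an associated prime $q$; since $q\supseteq m_RS$ and $S$ is integral over $R$, the ideal $q$ is a maximal ideal of $S$ lying over $m_R$. As $q\in\mbox{Ass}_S(S\bar x)\subseteq\mbox{Ass}_S(S/J_{m_0})$, localizing gives $q_q\in\mbox{Ass}_{S_q}\bigl((S/J_{m_0})_q\bigr)=\mbox{Ass}_{S_q}\bigl(S_q/(J_{m_0})_q\bigr)$.

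\emph{The second assertion.} By (\ref{eqZ1}) we always have $\ell(\mathcal I)\le\dim R=d$, so it suffices to prove $\ell(\mathcal I)=\dim R[\mathcal I]/m_RR[\mathcal I]\ge d$. We may assume $S\neq R$, so $M=\mbox{Ann}_R(S/R)$ is a proper ideal and the chosen nonzerodivisor $c\in M$ lies in $m_R$. From $cS\subseteq R$ and the fact that each $J_n$ is an ideal of $S$ we get $cJ_n\subseteq J_n\cap R=I_n$ for all $n$, which in the $t$-grading reads $c\,S[\mathcal J]\subseteq R[\mathcal I]\subseteq S[\mathcal J]$. Put $\mathfrak c:=c\,S[\mathcal J]$; then $\mathfrak c$ is an ideal of $R[\mathcal I]$ as well, because $R[\mathcal I]\cdot\mathfrak c=c\,(R[\mathcal I]\cdot S[\mathcal J])=c\,S[\mathcal J]=\mathfrak c$. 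Next, since $S$ is equidimensional and $R\to S$ is finite, every minimal prime of $S$ contracts to a prime of $R$ of coheight $d$, which is minimal because $R$ is equidimensional; such a prime avoids $c$ because $c$ is a nonzerodivisor of $R$, so $c$ lies in no minimal — hence in no associated — prime of the reduced ring $S$, i.e.\ $c$ is a nonzerodivisor on $S$, hence on $S[t]$, hence on $S[\mathcal J]$. Therefore multiplication by $c$ is an $R[\mathcal I]$-module isomorphism of $S[\mathcal J]$ onto the ideal $\mathfrak c$ of $R[\mathcal I]$, so $S[\mathcal J]$ is (isomorphic to) an $R[\mathcal I]$-submodule of $R[\mathcal I]$ and hence is module-finite, so integral, over $R[\mathcal I]$. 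Consequently the fibre cone $S[\mathcal J]/m_RS[\mathcal J]=S[\mathcal J]\otimes_{R[\mathcal I]}R[\mathcal I]/m_RR[\mathcal I]$ is module-finite over $R[\mathcal I]/m_RR[\mathcal I]$, so
\[
\dim S[\mathcal J]/m_RS[\mathcal J]\le\dim R[\mathcal I]/m_RR[\mathcal I]=\ell(\mathcal I).
\]
On the other hand $q\cap R=m_R$ gives $m_RS[\mathcal J]\subseteq qS[\mathcal J]$, so, localizing at the grading-compatible multiplicative set $S\setminus q$,
\[
\dim S[\mathcal J]/m_RS[\mathcal J]\ \ge\ \dim\bigl(S[\mathcal J]/qS[\mathcal J]\bigr)_{S\setminus q}\ =\ \dim S_q[\mathcal J_q]/q_qS_q[\mathcal J_q]\ =\ \ell(\mathcal J_q)\ =\ d.
\]
Combining the two displays gives $\ell(\mathcal I)\ge d$, hence $\ell(\mathcal I)=d$.

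\emph{The main obstacle.} The delicate point is the assertion that $S[\mathcal J]$ is module-finite (equivalently integral) over $R[\mathcal I]$: the mechanism is that $\mathfrak c=c\,S[\mathcal J]$ is at once an ideal of $R[\mathcal I]$ and, via multiplication by the nonzerodivisor $c$, isomorphic to $S[\mathcal J]$ as an $R[\mathcal I]$-module, and finiteness then follows from $\mathfrak c$ being a finitely generated ideal. This is exactly where the hypothesis that $M$ contain a nonzerodivisor is used, together with equidimensionality of $R$ and $S$ to ensure $c$ remains a nonzerodivisor on $S$ and on $S[\mathcal J]$; note in particular that, unlike the divisorial situation handled by Proposition \ref{PropD}, here no positivity or normality is available, so one cannot argue via extensions of valuations. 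If one wishes to avoid any appeal to Noetherianity of $R[\mathcal I]$ to get finite generation of $\mathfrak c$, the inequality $\dim S[\mathcal J]/m_RS[\mathcal J]\le\dim R[\mathcal I]/m_RR[\mathcal I]$ can instead be squeezed out directly from the $R[\mathcal I]$-module isomorphism $S[\mathcal J]/m_RS[\mathcal J]\cong\mathfrak c/m_R\mathfrak c$, whose right-hand side is a subquotient of $R[\mathcal I]$ annihilated by $m_RR[\mathcal I]$; everything else — the reduction to $\ell(\mathcal I)\ge d$, the passage through $qS[\mathcal J]$, and the localization at $S\setminus q$ — is routine.
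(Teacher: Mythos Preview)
Your proof of the first assertion is fine and essentially identical to the paper's: lift an element witnessing $m_R\in\mbox{Ass}(R/I_{m_0})$ to $S/J_{m_0}$, find a prime annihilator above $m_RS$, and observe it must be maximal.

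The second assertion has a genuine gap. Your main route to integrality of $S[\mathcal J]$ over $R[\mathcal I]$ is that $\mathfrak c=cS[\mathcal J]$ is an ideal of $R[\mathcal I]$, hence finitely generated, hence $S[\mathcal J]\cong\mathfrak c$ is module-finite. But $\mathcal I$ is an \emph{arbitrary} graded filtration here, so $R[\mathcal I]$ need not be Noetherian and its ideals need not be finitely generated. Your fallback does not rescue this. The isomorphism $S[\mathcal J]/m_RS[\mathcal J]\cong\mathfrak c/m_R\mathfrak c$ is only an isomorphism of $R[\mathcal I]$-modules; the quantity you must bound on the left is the \emph{ring} dimension of $S[\mathcal J]/m_RS[\mathcal J]$, and that is not controlled by its module structure over $R[\mathcal I]/m_RR[\mathcal I]$ unless the extension is already known to be integral. (Also, $\mathfrak c/m_R\mathfrak c$ is a quotient of the submodule $\mathfrak c\subset R[\mathcal I]$, but it is not a subquotient of $R[\mathcal I]/m_RR[\mathcal I]$: one only has a surjection $\mathfrak c/m_R\mathfrak c\twoheadrightarrow\mathfrak c/(\mathfrak c\cap m_RR[\mathcal I])\hookrightarrow R[\mathcal I]/m_RR[\mathcal I]$, which goes the wrong way for a dimension bound.)

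The paper's fix is exactly the missing idea: pass to truncations. For each $a$, the $R$-subalgebra $R[\mathcal I]_a\subset R[\mathcal I]$ generated by $I_1,\ldots,I_a$ is of finite type over the Noetherian ring $R$, hence Noetherian. For a homogeneous $f$ of degree $a$ in $B=S[\mathcal J]$ one runs your conductor argument \emph{inside this Noetherian truncation}: using $xJ_n\subset I_n$ one gets $R[\mathcal I]_a[f]\subset\frac{1}{x}R[\mathcal I]_a$, and since $\frac{1}{x}R[\mathcal I]_a$ is a finitely generated module over the Noetherian ring $R[\mathcal I]_a$, the subalgebra $R[\mathcal I]_a[f]$ is module-finite and $f$ is integral over $R[\mathcal I]_a\subset R[\mathcal I]$. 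With integrality of $B$ over $R[\mathcal I]$ in hand, the chain of primes in $B$ above $m_R$ coming from $\ell(\mathcal J_q)=d$ contracts, by incomparability, to a chain of $d+1$ distinct primes in $R[\mathcal I]$ above $m_R$, giving $\ell(\mathcal I)\ge d$ and hence $\ell(\mathcal I)=d$ by (\ref{eqZ1}).
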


\begin{proof} Let $\{\mathfrak m_i$\} be the maximal ideals of $S$.
   Since we assume that $m_R\in \mbox{Ass}_R(R/I_{m_0})$, there exists $y\in R/I_{n_0}$ such that $m_R=\mbox{ann}_R(y)$. We have that $y\in S/J_n$ is nonzero since  $R/I_{m_0}\rightarrow S/J_{m_0}$ is an inclusion. Thus $\mbox{Ann}_S(y)\ne S$. Since maximal elements in the set of annihilators of elements of $S/J_{m_0}$ are prime ideals (by \cite[Theorem 6.1]{Ma}), there exists a prime ideal $q$ in $S$ which contains $\mbox{Ann}_S(y)$ and is the annihilator of an element $z$ of $S/J_{m_0}$. We have that $q$ contains $m_RS$ and 
   \begin{equation}\label{eqZ3}
   \sqrt{m_RS}=\cap \mathfrak m_i
   \end{equation}
    so $q$ is a maximal ideal of $S$ such that  $q\in\mbox{Ass}_S(S/J_{m_0})$, and thus $qS_q\in \mbox{Ass}_{S_q}(S_q/(J_{m_0})_q)$.   
     
    Further suppose that 
    \begin{equation}\label{eqZ2}
   \ell(\mathcal J_q)=\dim (S_{q})=\dim R
   \end{equation}
   and there exists a nonzero divisor of $R$ in $M=\mbox{Ann}_R(S/R)$

   Let $B=\oplus_{n\ge 0}J_n$, which is a graded ring. 
 We thus have by (\ref{eqZ1}), and (\ref{eqZ2})  that $\dim B/m_RB=\dim R$.
 Thus there is a chain of distinct prime ideals 
 $$
 C_0\subset C_1\subset C_2\subset \cdots\subset C_d
 $$
 in $B$ which contain $m_RB$, where $d=\dim R$.

 There is a natural inclusion of graded rings $R[\mathcal I]=\oplus_{n\ge 0}I_n\subset B=\oplus_{n\ge 0}J_n$. We will now show that $B$ is integral over $R[\mathcal I]$. It suffices to show that homogeneous elements of $B$ are integral over $R[\mathcal I]$.

 For $a\in \ZZ_{>0}$, let $R[\mathcal I]_a$ be the $a$-th truncation of $R[\mathcal I]$ and $B_a$ be the $a$-th truncation of $B$, so that $R[\mathcal I]_a$ is the subalgebra of $R[\mathcal I]$ generated by $\oplus_{n\le a}I_n$ and $B_a$ is the subalgebra of $B$ generated by $\oplus_{n\le a}J_n$.   Suppose that $f\in J_a$ for some $a$.
 Then $f\in B_a$. Let $0\ne x$ be a nonzero divisor of $R$ in $\mbox{Ann}_R(S/R)$. Then $xJ_n\subset I_n$ for all $n$ since $I_n=J_n\cap R$.
Thus $xB_a\subset R[\mathcal I]_a$, so $f^i\in \frac{1}{x}R[\mathcal I]_a$ for all $i\in \NN$, and so the algebra 
$R[\mathcal I]_a[f]\subset \frac{1}{x}R[\mathcal I]_a$. Since $\frac{1}{x}R[\mathcal I]_a$ is a finitely generated $R[\mathcal I]_a$-module and $R[\mathcal I]_a$ is a Noetherian ring, the ring $R[\mathcal I]_a[f]$ is a finitely generated $R[\mathcal I]_a$-module, so that $f$ is integral over $R[\mathcal I]_a$.

We have a chain of prime ideals 
$$
Q_0\subset Q_1\subset Q_2\subset \cdots\subset Q_d
$$
in $R[\mathcal I]$ where $Q_i:=C_i\cap R[\mathcal I]$. The $Q_i$ are all distinct since the $C_i$ are all distinct and $B$ is integral over $R[\mathcal I]$ (by \cite[Theorem A.6 (b)]{BH}). We have  that $m_RR[\mathcal I]\subset Q_0$, so that 
$\dim R[\mathcal I]/m_RR[\mathcal I]\ge d$. Since this is the maximum possible dimension of $R[\mathcal I]/m_RR[\mathcal I]$ by (\ref{eqZ1}), we have that $\ell(\mathcal I)=d$.

   \end{proof}

\begin{Lemma}\label{LemmaE} Suppose that $R$ is an excellent local domain and $v_1,\ldots,v_r$ are divisorial valuations in $R$. Then there exists a birational projective morphism $\pi:X\rightarrow \mbox{Spec}(R)$ such that $X$ is normal and there exist prime divisors $F_1,\ldots,F_r$ on $X$ such that $\mathcal O_{X,F_i}$ is the valuation ring of $v_i$ for $1\le i\le r$.
\end{Lemma}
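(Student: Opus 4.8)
The plan is to reduce to a single divisorial valuation and then pass to a common normal model dominating the models obtained for each $v_i$. First I would treat one divisorial valuation $v$ of $R$. Since $R$ is an excellent local domain, $\mathcal{O}_v$ is a discrete valuation ring by \cite[Theorem 9.3.2]{HS}, and it is essentially of finite type over $R$ by \cite[Lemma 6.1]{CPS1}. Writing $\mathcal{O}_v=C_{\mathfrak q}$ with $C$ a finitely generated $R$-subalgebra of $K$ having quotient field $K$ and $\mathfrak q\in\mathrm{Spec}(C)$, I would clear a common denominator of a set of generators of $C$ to exhibit $\mathrm{Spec}(C)$ as an affine chart of a blow-up of $\mathrm{Spec}(R)$ along an ideal, hence as an open subset of a projective birational $R$-scheme; normalizing (a finite morphism, since $R$ is excellent) produces a normal projective birational $\pi_v\colon X_v\rightarrow\mathrm{Spec}(R)$. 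As $\mathcal{O}_v=C_{\mathfrak q}$ is a normal local ring of dimension one, there is a unique point of $X_v$ lying over $\mathfrak q$ and its local ring is $\mathcal{O}_v$; being of dimension one in the normal scheme $X_v$, this point is the generic point of a prime divisor $F_v$ with $\mathcal{O}_{X_v,F_v}=\mathcal{O}_v$.

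Applying this to each of $v_1,\dots,v_r$ gives normal projective birational $R$-schemes $X_1,\dots,X_r$ carrying the $v_i$. I would then form their join: let $W$ be the scheme-theoretic closure in $X_1\times_R\cdots\times_R X_r$ of the image of $\mathrm{Spec}(K)$ under the morphism induced by the generic points of the $X_i$, and let $X$ be the normalization of $W$. Then $W$ is integral and is projective and birational over $\mathrm{Spec}(R)$, and $X\rightarrow\mathrm{Spec}(R)$ is again projective and birational (excellence makes $X\rightarrow W$ finite) with $X$ normal; moreover the projections induce projective birational morphisms $g_i\colon X\rightarrow X_i$.

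Finally, fix $i$. Because $X$ is proper over $\mathrm{Spec}(R)$ and $v_i$ is nonnegative on $R$, the valuative criterion provides a morphism $\mathrm{Spec}(\mathcal{O}_{v_i})\rightarrow X$ extending the inclusion of the generic point; let $\eta_i\in X$ be the image of the closed point, the center of $v_i$ on $X$, so that $\mathcal{O}_{X,\eta_i}\subseteq\mathcal{O}_{v_i}$. By the uniqueness in the valuative criterion, $g_i$ carries this morphism to the corresponding one for $X_i$, so $g_i(\eta_i)$ is the generic point of $F_i$ and therefore $\mathcal{O}_{v_i}=\mathcal{O}_{X_i,F_i}\subseteq\mathcal{O}_{X,\eta_i}\subseteq\mathcal{O}_{v_i}$, forcing $\mathcal{O}_{X,\eta_i}=\mathcal{O}_{v_i}$. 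Since this ring has dimension one, $\eta_i$ has codimension one in the normal scheme $X$ and is thus the generic point of a prime divisor on $X$ with valuation ring $\mathcal{O}_{v_i}$, as required.

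I expect the only genuine obstacle to be the first step: producing from the ``essentially of finite type'' description of $\mathcal{O}_v$ an honest normal projective birational model carrying $v$ as a divisor, and tracking where excellence is used (namely for that description and for the finiteness of each normalization). One could instead simply quote the standard fact that every divisorial valuation of an excellent domain is the canonical valuation of a prime divisor on some normal projective birational model; with that in hand, the join construction together with the sandwich $\mathcal{O}_{X_i,F_i}\subseteq\mathcal{O}_{X,\eta_i}\subseteq\mathcal{O}_{v_i}$ constitutes the real content of the lemma.
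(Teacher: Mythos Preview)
Your argument is correct and complete. The paper does not actually give a proof of this lemma; it simply cites Remark~6.6 to Lemma~6.5 of \cite{CPS1}, so there is no independent argument in the paper to compare against. Your construction --- realize each $\mathcal O_{v_i}$ as a localization of a finitely generated $R$-subalgebra of $K$, embed this affine scheme as a chart of a blow-up, normalize, then pass to the normalized join of the resulting models and use the sandwich $\mathcal O_{X_i,F_i}\subset\mathcal O_{X,\eta_i}\subset\mathcal O_{v_i}$ --- is the standard way to establish this statement and is almost certainly what the cited reference does. The places where excellence enters (finiteness of normalization, and the characterization of divisorial valuation rings as essentially of finite type over $R$) are exactly where you flag them.
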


\begin{proof} This follows from  \cite[Remark 6.6 to Lemma 6.5]{CPS1}.
\end{proof}

\begin{Definition}\label{AltDef}(\cite[Definition 2.20]{dJ})
Let $X$ be a Noetherian integral scheme. An Alteration $X'$ of $X$ is an integral scheme $X'$ with a morphism $\phi:X'\rightarrow X$ which is dominant, proper and such that for some open $U\subset X$, $\phi^{-1}(U)\rightarrow U$ is finite.
\end{Definition}

A scheme $X$ is nonsingular if $\mathcal O_{X,q}$ is a regular local ring for all $q\in X$. An alteration $\phi:X'\rightarrow X$ is regular if $X'$ is nonsingular. 


\subsection{Associated primes and maximal analytic spread}\label{Secproof}

In this subsection we prove  the following theorem.
Theorem \ref{Theorem2} is proven for excellent equicharacteristic zero local domains in Theorem 1.3 \cite{SDCProc}.

\begin{Theorem}\label{Theorem2}
Let $R$ be an equidimensional  excellent local ring which has the property that if $P$ is a minimal prime of $R$ and 
$X\rightarrow \mbox{Spec}(R/P)$ is a birational projective morphism, with $X$ integral, then there exists a projective  regular alteration $X'\rightarrow X$.
Let $\mathcal I=\{I_n\}$ be a $\QQ$-divisorial filtration on $R$. If $m_R\in \mbox{Ass}(R/I_{m_0})$ for some $m_0\in \ZZ_{>0}$, then the analytic spread of $\mathcal I$ is $\ell(\mathcal I)=\dim R$. 
\end{Theorem}

\begin{proof}


Suppose that $m_R\in \mbox{Ass}_R(R/I_{m_0})$ for some $m_0$. Let $d=\dim R$.

\noindent{\bf Step 1.}  Suppose that $R$ is normal.
 By Lemma \ref{LemmaE}, there exists a birational projective morphism $Y_1\rightarrow\mbox{Spec}(R)$ such that $Y_1$ is normal and there exists an effective $\QQ$-Weil divisor $D_1$ on $Y_1$ such that  $\mathcal I=\mathcal I(D_1)$. 
 By the assumptions of the theorem, there exists a projective regular alteration $Z\rightarrow Y$.

 
Let $K$ be the quotient field of $R$ and let $L=\mathcal O_{Z,\eta}$ where $\eta$ is the generic point of $Z$. Let $S$ be the normalization of $R$ in $L$, with natural projective birational morphism $Z\rightarrow \mbox{Spec}(S)$. $S$ is a finitely generated $R$-module since $R$ is excellent. Let $q_1,\ldots,q_e$ be the distinct maximal ideals of $S$. Let $\mathcal I(R)=\mathcal I=\mathcal I(D_1)$. Let $Z_1\rightarrow Y_1$ be the normalization of $Y_1$ in $L$, with projective birational morphisms $Z\rightarrow Z_1\rightarrow \mbox{Spec}(S)$. 
Define $\mathcal I(S)$ from $\mathcal I(R)$ by  the construction of (\ref{eq5}). Define a $\QQ$-Weil divisor $D_1'$ on $Z_1$ from $D_1$ by the construction of (\ref{eq6}), so that $\mathcal I(S)=\mathcal I(D_1')$. Writing $D_1'=\sum b_{ij}F_{ij}$, let $D'$ be the $\QQ$-divisor $D'=\sum b_{ij}\overline F_{ij}$, where $\overline F_{ij}$
is the unique codimension 1 subvariety of $Z$ which dominates the codimension one subvariety $F_{ij}$ in the normal variety $Z_1$.  Then $\mathcal I(D')=\mathcal I(D_1')=\mathcal I(S)$.

Since $m_R\in \mbox{Ass}(R/I(R)_{m_0})$,  $I(R)_{m_0}:m_R^{\infty}\ne I(R)_{m_0}$ which implies that $I(S)_{m_0}:J(S)^{\infty}\ne I(S)_{m_0}$ by Lemma \ref{LemmaC}. Thus there exists some $q\in\{q_1,\ldots q_e\}$ such that $[I(S)_{m_0}:J(S)^{\infty}]_q\ne [I(S)_{m_0}]_q$.

Define the divisorial filtration $\mathcal I(S_q)=\{I(S_q)_n\}$ on the $d$-dimensional excellent normal local ring $S_q$ by the localization $I(S_q)_n=I(S)_n\otimes_SS_q$. We have that  $I(S_q)_{m_0}:m_{S_q}^{\infty}\ne I(S_q)_{m_0}$. The base change $Z_q=Z\times_SS_q \rightarrow \mbox{Spec}(S_q)$ is a birational projective morphism such that $Z_q$ is nonsingular. Further, $\mathcal I(S_q)=\mathcal I(D'|Z_q)$ for the $\QQ$-Cartier divisor $D'|Z_q$ on $Z_q$. By Theorem \ref{Theorem1}, $\ell(\mathcal I(S_q))=\dim S=d$.
$S[\mathcal I(S_q)]=S[\mathcal I(S)]\otimes_SS_q$ and $\dim S_q[\mathcal I(S_q)]/q_qS_q[\mathcal I(S_q)]
=\ell(\mathcal I(S_q))=d$  implies there exists a chain of distinct prime ideals $P_0\subset P_1\subset \cdots \subset P_d$ in $S[\mathcal I(S)]$ such that $q\subset P_0$. From the inclusion $R[\mathcal I(R)]\subset S[\mathcal I(S)]$, we have a chain of prime ideals $Q_0=P_0\cap R[\mathcal I(R)]\subset Q_1=P_1\cap R[\mathcal I(R)]\subset \cdots \subset Q_d=P_d\cap R[\mathcal I(R)]$.
Now $R[\mathcal I(R)]\rightarrow S[\mathcal I(S)]$ is an integral extension by Proposition \ref{PropD}. Thus the prime ideals $Q_0\subset Q_1\subset \cdots \subset Q_d$ are all distinct by  \cite[Theorem A.6]{BH}. Since these ideals all contain $q\cap R=m_R$, we have that $\ell(\mathcal I(R))\ge d$. Since we have the upper bound $\ell(\mathcal I(R))\le d$ of (\ref{eqZ1}),
we have that $\ell(\mathcal I(R))=d$.
This completes the proof of Theorem \ref{Theorem2} in the case that $R$ is normal.

\noindent{\bf Step 2.}
Now suppose that $R$ is a domain, which is not normal.
There exist  divisorial valuations 
   $\nu_1,\ldots,\nu_t$  of $R$ and $a_1,\ldots, a_t\in \QQ_{>0}$ such that $\mathcal I=\{I_n\}$ where 
   $I_n=I_R(\nu_1)_{a_1n}\cap \cdots \cap I_R(\nu_t)_{a_tn}$ for $n\ge 0$.   
   Let $S$ be the normalization of $R$ in the quotient field of $R$. 
   Let $J(\nu_i)_m=\{f\in S\mid \nu_i(f)\ge m\}$.  
   For $n\in \NN$, let 
   $$
   J_n=J(\nu_1)_{a_1n}\cap \cdots \cap J(\nu_t)_{a_tn}
   $$
   so that $J_n\cap R=I_n$. Let $\mathcal J=\{J_n\}$.   
   By Proposition \ref{Prop1},  there exists a maximal ideal $q$ of $S$ such that $qS_q\in \mbox{Ass}_{S_q}(S_q/(J_{m_0})_q)$.   
Now $\mathcal J_q$ is a $\QQ$-divisorial filtration of the $d$-dimensional normal local ring $S_q$. A birational projective morphism $X\rightarrow \mbox{Spec}(S)$ induces a birational projective morphism $X\rightarrow \mbox{Spec}(R)$, so 
 $\ell(\mathcal J_q)=d$ by Step 1. Let $0\ne x\in R$ be in the conductor of $S$ over $R$, so that $x\in \mbox{Ann}_R(S/R)$ is a nonzero divisor on $R$. Then by Proposition \ref{Prop1}, we have that
$\ell(\mathcal I)=d$.

\noindent{\bf Step 3.} 
Suppose  that $R$ is reduced and equidimensional and $\mathcal I$ has an expression (\ref{eq8}).
 Let $S=\oplus_{i=1}^tR/P_i$ where $P_i$, $1\le i\le t$ are the minimal primes of $R$. Let $\mathcal J=\{J_n\}$ where 
 $$
 J_n=\oplus_{i=1}^t[\cap_jI_{R/P_i}(v_{ij})_{n\lambda_{ij}}].
 $$
 We have that $J_n\cap R=I_n$. By Proposition \ref{Prop1}, there exists a maximal ideal $q$ of $S$ such that $qS_q\in \mbox{Ass}_{S_q}(S_q/(J_{m_0})_q)$.
 
 Let $M=S/R$, $A=\mbox{Ann}_RM$. Localizing the exact sequence of $R$-modules 
 $$
 0\rightarrow R\rightarrow \oplus R/P_i\rightarrow M\rightarrow 0
 $$
 at $P_i$, we see that $M_{P_i}=0$ for all $i$. Thus $A_{P_i}=R_{P_i}$ for all $i$ so that $A\not\subset P_i$ for all $i$, and so $A\not\subset \cup P_i$ by prime avoidance. Thus there exists $x\in A\setminus \cup P_i$. Since $R$ is reduced, $x$ is a nonzero divisor of $R$. Now $S_q\cong (R/P_i)_{m_{R/P_i}}$ for some $i$, and $\mathcal J_q$ is a $\QQ$-divisorial filtration on the $d$-dimensional local domain $S_q$, so that $\ell(\mathcal J_q)=d$ by Step 2. Thus $\ell(\mathcal I)=d$ by Proposition \ref{Prop1} (this is where the assumption that $R$ is equidimensional is used).
 
 \noindent{\bf Step 4.}
  Now suppose that $R$ is an arbitrary equidimensional local ring. Let $\mathcal I=\{I_n\}$ have an expression (\ref{eq8}). Let $\tilde R=R/\cap_{i=1}^tP_i$, which is a reduced local ring, with natural surjection $R\rightarrow \tilde R$. Since $P_i$ is contained in the kernel of $\phi_i$ for all $i$, we have that $\cap_{i=1}^tP_i$ is contained in 
 $\phi^{-1}(I_{R/P_i}(v_{ij})_{n\lambda_{i,j}})$ for all $i,j$, and so $\cap_{i=1}^tP_i$ is contained in $I_n$ for all $n$.  Let $\alpha_i:\tilde R \rightarrow R/P_i$   be the natural projection   for $1\le i\le t$. Let $\tilde{\mathcal I}$ be the $\QQ$-divisorial filtration on $\tilde R$ defined by $\tilde I_n=\cap_{i=1}^t\cap_{i,j} \alpha_i^{-1}(I_{R/P_i}(v_{ij})_{n\lambda_{ij}})$. We have that $R/I_n\cong \tilde R/\tilde I_n$ for all $n$ and $m_{\tilde R}=m_R\tilde R$. Thus $m_R$ an associated prime of $R/I_n$ implies $m_{\tilde R}$ is an associated prime of $\tilde R/\tilde I_n$, so  $\ell(\tilde{\mathcal I})=\dim \tilde R=\dim R$ by Step 3. Since there is a natural surjection 
 $$
 R[\mathcal I]/m_RR[\mathcal I]\rightarrow \tilde R[\tilde{\mathcal I}]/m_{\tilde R}\tilde R[\tilde{\mathcal I}],
 $$
 we have that $\ell(\mathcal I)\ge \dim R$, so by (\ref{eqZ1}), we have that $\ell(\mathcal I)=\dim R$ and the theorem holds for $R$.
 
 \end{proof}

\section{A divisorial valuation whose first difference function doesn't have a limit}\label{SecExample}

\subsection{Boundedness of the difference function}
 In this subsection we prove the following lemma.
 
 \begin{Lemma}\label{Lemmadiff} Suppose that $R$ is a $d$-dimensional normal local ring which is essentially of finite type over a field and $\mathcal I=\{I_n\}$ is a $\QQ$-divisorial filtration of $m_R$-primary ideals on $R$. Then there exists a constant $c$ such that
 $$
 \lambda_R(I_n/I_{n+1})\le cn^{d-1}
 $$
 for all $n$. Thus 
 $$
 \limsup_{n\rightarrow\infty}\frac{\lambda_R(I_n/I_{n+1})}{n^{d-1}}
 $$
 is finite. 
 \end{Lemma}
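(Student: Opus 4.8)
\emph{Strategy.} The plan is to bound $\lambda_R(I_n/I_{n+1})$ by the $R$-length of a module of global sections of a coherent sheaf supported on the $(d-1)$-dimensional exceptional fibre of a resolution of $\operatorname{Spec}(R)$, and then to estimate that length by a Hilbert-function computation on a $(d-1)$-dimensional projective scheme, so that the exponent $d-1$ is forced by the dimension count.

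\emph{Reduction to a nonsingular model.} Write $I_n=I(v_1)_{na_1}\cap\cdots\cap I(v_r)_{na_r}$ with $v_i$ divisorial valuations of $R$ and $a_i\in\QQ_{>0}$; since the $I_n$ are $m_R$-primary the $v_i$ are $m_R$-valuations. Carrying out the construction used in the proof of Theorem~\ref{Theorem2} in \S\ref{Secproof} --- via Lemma~\ref{LemmaE}, the constructions of \S\ref{SecDiv}, and de Jong's alteration theorem~\ref{TheoremdJ} applied to a projective model after base change along a finite field extension --- produces a finite extension $R\to S$, with $S$ the normalization of $R$ in a finite extension $L$ of its quotient field, and a maximal ideal $q$ of $S$ such that $S_q$ is again a $d$-dimensional normal excellent local ring essentially of finite type over the field, the filtration $\mathcal J_q=\{(J_n)_q\}$ obtained from $\mathcal I$ via \eqref{eq5} and localization is $\QQ$-divisorial and $m_{S_q}$-primary (Lemma~\ref{LemmaB}), and there is a projective birational $\pi:X\to\operatorname{Spec}(S_q)$ with $X$ \emph{nonsingular} and an effective divisor $D$ on $X$ with $(J_n)_q=\Gamma(X,\mathcal O_X(-\lceil nD\rceil))$. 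By Lemma~\ref{LemmaA} we have $I_n=J_n\cap R$ for all $n$, so $I_n/I_{n+1}$ embeds in the $S$-module $J_n/J_{n+1}=\bigoplus_q(J_n)_q/(J_{n+1})_q$, whose $R$-length is at most a fixed constant times $\sum_q\lambda_{S_q}((J_n)_q/(J_{n+1})_q)$. Hence it suffices to prove the lemma on each $S_q$, and so we may assume from the start that $R$ additionally carries a nonsingular model $\pi:X\to\operatorname{Spec}(R)$ with $\pi_*\mathcal O_X=R$ and $I_n=\Gamma(X,\mathcal O_X(-\lceil nD\rceil))$ for an effective divisor $D$. As the $I_n$ are $m_R$-primary, every prime component of $D$ has center $m_R$, so $\operatorname{Supp}(D)$ lies in $\pi^{-1}(m_R)$, a projective $k$-scheme of dimension $d-1$, where $k=R/m_R$.

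\emph{The estimate.} Let $b=\max_i\lceil a_i\rceil$. Then $F_n:=\lceil(n+1)D\rceil-\lceil nD\rceil$ is effective, supported on $\operatorname{Supp}(D)$, with all coefficients $\le b$, and from
$$0\to\mathcal O_X(-\lceil(n+1)D\rceil)\to\mathcal O_X(-\lceil nD\rceil)\to\mathcal G_n\to0$$
with $\mathcal G_n$ supported on $\operatorname{Supp}(D)$, taking global sections and using $\pi_*\mathcal O_X=R$ gives $\lambda_R(I_n/I_{n+1})\le\lambda_R(H^0(X,\mathcal G_n))$. Filtering $\mathcal G_n$ by adjoining one prime component of $F_n$ at a time --- at most $rb$ steps, a bound independent of $n$ --- each successive quotient is $\mathcal O_X(-A)\otimes_{\mathcal O_X}\mathcal O_E$ for a prime component $E$ of $D$ and an effective divisor $A\le\lceil(n+1)D\rceil$; since $X$ is nonsingular, $\mathcal O_X(-A)$ is invertible, so this quotient is the line bundle $\bigotimes_i\mathcal M_i^{\otimes\alpha_i}$ on the $(d-1)$-dimensional projective $k$-scheme $E$, where $\mathcal M_i=\mathcal O_X(-E_i)|_E$ is a fixed line bundle and $0\le\alpha_i\le(n+1)b$. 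It remains to show $\dim_k H^0\bigl(E,\bigotimes_i\mathcal M_i^{\otimes\alpha_i}\bigr)=O(n^{d-1})$, uniformly in the finitely many $E$ and in such $\alpha_i$. Choosing a very ample $\mathcal A$ on $E$ and $c\gg0$ with every $\mathcal M_i\otimes\mathcal A^{\otimes c}$ globally generated, a standard prime-avoidance argument (using that $E$ is reduced) bounds this by $\dim_k H^0(E,\mathcal P^{\otimes s})$, where $\mathcal P=\bigotimes_i(\mathcal M_i\otimes\mathcal A^{\otimes c})$ is globally generated and $s=\sum_i\alpha_i=O(n)$; and $\mathcal P$ defines a morphism $g$ to a projective space with $\mathcal P^{\otimes s}=g^*\mathcal O(s)$, so $H^0(E,\mathcal P^{\otimes s})\cong H^0(g(E),\mathcal O_{g(E)}(s)\otimes g_*\mathcal O_E)$, whose dimension is $O(s^{\dim g(E)})=O(n^{d-1})$ by the standard growth estimate for global sections of a coherent sheaf twisted by powers of an ample line bundle. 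Summing the $\le rb$ steps gives $\lambda_R(H^0(X,\mathcal G_n))=O(n^{d-1})$, which is the assertion.

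\emph{The main obstacle.} The genuinely delicate point is the reduction to a \emph{nonsingular} model: over a merely normal model the restricted sheaves $\mathcal O_X(-A)\otimes\mathcal O_E$ need not be line bundles, and the torsion they acquire --- supported in codimension $\ge1$ on $E$, hence on a fixed locus of dimension $\le d-2$ --- would have to be controlled by descending induction on $d$. It is therefore cleaner, following \S\ref{Secproof}, to invoke de Jong alterations to arrange a nonsingular model at the outset; everything after that is the routine Hilbert-function estimate above.
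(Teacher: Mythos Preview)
Your argument is correct and follows essentially the same strategy as the paper: reduce via the alteration construction of Step~1 of \S\ref{Secproof} to a nonsingular model over a finite extension $S$ of $R$, embed $I_n/I_{n+1}$ into global sections of a sheaf supported on the $(d-1)$-dimensional exceptional locus, and finish with a Hilbert-function estimate on that locus.

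The execution of the last step differs. The paper exploits the fact that $D'$ is a $\QQ$-divisor to observe that both the set of differences $G_n=\lceil(n+1)D'\rceil-\lceil nD'\rceil$ and the set of remainders $F_n=\lceil nD'\rceil-m(n)aD'$ (for $a$ clearing denominators) are \emph{finite}; it then bounds $h^0(\mathcal O_{G'}\otimes\mathcal O_Z(-laD'-F'))$ by $h^0(\mathcal O_{G'}\otimes\mathcal O_Z(lA-F'))=\chi(\mathcal O_{G'}\otimes\mathcal O_Z(lA-F'))$ for a fixed ample $A$ (the equality holding for $l\gg0$ by Serre vanishing), and invokes the Snapper--Kleiman polynomial to get a bound of degree $d-1$ in $l\sim n/a$. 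Your route---filtering $\mathcal G_n$ by the prime components of $F_n$ and then embedding each resulting line bundle on an integral $E$ into powers of a single globally generated bundle via regular sections---is valid but less direct: the paper's finiteness observation collapses the problem to finitely many one-parameter families at once, so the polynomial bound falls out without any filtration or section-multiplication tricks. A minor point: your detour through the localizations $S_q$ is unnecessary, since the paper works with the semi-local $S$ directly (the inclusion $I_n/I_{n+1}\hookrightarrow I(S)_n/I(S)_{n+1}$ already does the job).
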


\begin{proof} There exists a field $\tilde k$ such that $\tilde k\subset R$ and $[R/m_R:\tilde k]<\infty.$
Let $\mathcal I(R)=\mathcal I$. The proof of Step 1 of Theorem \ref{Theorem2} in Section \ref{Secproof} and \cite[Theorem 4.1]{dJ}, shows that there exists a finite field extension $L$ of the quotient field $K$ of $R$ such that if $S$ is the normalization of $R$ in $L$ then there exists a resolution of singularities $Z\rightarrow \mbox{Spec}(S)$ and an effective $\QQ$-divisor $D'$ on $Z$ such that $\mathcal I(S)=\mathcal I(D')$  and $I(S)_n\cap R=I(R)_n=I_n$ for all $n$. Thus we have inclusions 
\begin{equation}\label{eqN1}
I_n/I_{n+1}\subset I(S)_n/I(S)_{n+1}
\end{equation}
for all $n$. There exists a positive integer $a$ such that $aD'$ is a $\ZZ$-divisor. Write $n=m(n)a+b(n)$n where $0\le b(n)<a$. Let $F_n=\lceil nD'\rceil-m(n)aD'$. The $\{F_n\}$ are effective divisors such that set of all $F_n$ is a finite set.  Let
$$
G_n=\lceil (n+1)D'\rceil -\lceil nD'\rceil.
$$ 
Again, the set of all $G_n$ is finite. We have short exact sequences
$$
0\rightarrow \mathcal O_Z(-G_n)\rightarrow \mathcal O_Z\rightarrow \mathcal O_{G_n}\rightarrow 0
$$
yeilding short exact sequences
$$
0\rightarrow \mathcal O_Z(-\lceil (n+1)D'\rceil)\rightarrow \mathcal O_Z(-\lceil nD'\rceil)\rightarrow \mathcal O_{G_n}\otimes\mathcal O_Z(-m(n)aD'-F_n)
\rightarrow 0.
$$
Taking global sections, we see that 
\begin{equation}\label{eqN2}
\begin{array}{lll}
I(S)_n/I(S)_{n+1}&=&\Gamma(Z,\mathcal O_Z(-\lceil (n+1)D'\rceil))/\Gamma(Z,\mathcal O_Z(-\lceil nD'\rceil))\\
&\subset& \Gamma(Z,\mathcal O_{G_n}\otimes_{\mathcal O_Z}\mathcal O_Z(-m(n)aD'-F_n)).
\end{array}
\end{equation}
 Let $A$ be  an ample divisor on the projective $R$-scheme  $Z$ such that $-D'<A$.
 Let $F'\in \{F_n\}$ and $G'\in \{G_n\}$.  $G'$ is a projective $\tilde k$-scheme of dimension $d-1$. For a coherent sheaf $\mathcal F$ on $G'$ let $h^0(\mathcal F)=\dim_{\tilde k}\Gamma(G',\mathcal F)$. Then 
 $$
 h^0(\mathcal F)=[R/m_R:\tilde k]\ell_R(\Gamma(G',\mathcal F)).
 $$
 We have that
 $$
 h^0(\mathcal O_{G'}\otimes\mathcal O_Z(-laD-F'))\le h^0(\mathcal O_{G'}\otimes\mathcal O_Z(lA-F'))
 =\chi(\mathcal O_{G'}\otimes\mathcal O_Z(lA-F'))
 $$
 for $l\gg 0$ by Proposition III.5.3 \cite{H} or Theorem 17.8 \cite{AG}. The Euler characteristic $\chi(\mathcal O_{G'}\otimes\mathcal O_Z(lA-F'))$ is a polynomial in $l$ of degree $\le \dim G'=d-1$ by \cite{Sn}, \cite{Kl} or Theorem 19.1 \cite{AG}.
 Thus there exists a constant $c'$ such that
 \begin{equation}\label{eqN3}
 h^0(\mathcal O_{G_n}\otimes\mathcal O_Z(-m(n)aD-F_n))\le c'n^{d-1}
 \end{equation}
 for all positive $n$, and so,
 there exists a positive constant $c$ such that 
 $$
 \lambda_R(I_n/I_{n+1})\le cn^{d-1}
 $$
  for all $n$ by (\ref{eqN1}), (\ref{eqN2}) and (\ref{eqN3}).

\end{proof}

\subsection{Beatty Sequences}\label{SecBeatty}

For $x\in \RR$, $\lfloor x\rfloor$ will denote the round down of  $x$ and $\lceil x\rceil$ will denote the round up of  $x$.
The fractional part of $x$ is $\{x\}=x-\lfloor x\rfloor$.

Let $\alpha\in \RR$. The Beatty sequence of $\alpha$ is the sequence $\sigma_{\alpha}(n)=\lfloor \alpha(n+1)\rfloor - \lfloor \alpha n\rfloor$ for $n\in \ZZ_{>0}$.
We remark that if $\alpha$ is irrational, then
$\sigma_{\alpha}(n)=\lceil \alpha(n+1)\rceil-\lceil \alpha n\rceil$ for all $n\in \ZZ_{>0}$. We have the following classical result.

\begin{Proposition}\label{Prop1*} Suppose that $\alpha$ is an irrational number. Then there are exactly two values of $\sigma_{\alpha}(n)$ for $n\in \ZZ_{>0}$; they are $\lfloor \alpha\rfloor$ and $\lceil \alpha\rceil$. Each of these two values of $\sigma_{\alpha}(n)$ occur for infinitely many values of $n\in \ZZ_{>0}$.
\end{Proposition}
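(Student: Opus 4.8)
The plan is to prove the classical characterization of Beatty sequences of an irrational $\alpha$ by a direct estimate on $\sigma_\alpha(n)=\lfloor\alpha(n+1)\rfloor-\lfloor\alpha n\rfloor$ using the fractional part. First I would write $\sigma_\alpha(n)=\alpha-(\{\alpha(n+1)\}-\{\alpha n\})$, so that $\sigma_\alpha(n)-\alpha=-(\{\alpha(n+1)\}-\{\alpha n\})$ lies strictly between $-1$ and $1$ (strictness because $\alpha$ irrational forces $\{\alpha n\}$ and $\{\alpha(n+1)\}$ to be distinct and never $0$). Since $\sigma_\alpha(n)$ is an integer lying in the open interval $(\alpha-1,\alpha+1)$ of length $2$, and this interval contains exactly the two integers $\lfloor\alpha\rfloor$ and $\lceil\alpha\rceil$ (they are distinct since $\alpha\notin\ZZ$, and $\lceil\alpha\rceil=\lfloor\alpha\rfloor+1$), I conclude $\sigma_\alpha(n)\in\{\lfloor\alpha\rfloor,\lceil\alpha\rceil\}$ for every $n$. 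This disposes of the first assertion.

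For the second assertion, that each value is attained infinitely often, I would use equidistribution, or more elementarily the density, of the sequence $\{\alpha n\}$ in $[0,1)$, which holds because $\alpha$ is irrational (Weyl's theorem, or the classical three-distance / pigeonhole argument). Writing $\beta=\{\alpha\}\in(0,1)$, a short computation shows $\sigma_\alpha(n)=\lceil\alpha\rceil=\lfloor\alpha\rfloor+1$ precisely when $\{\alpha(n+1)\}<\{\alpha n\}$, i.e.\ when $\{\alpha n\}\in(1-\beta,1)$, and $\sigma_\alpha(n)=\lfloor\alpha\rfloor$ precisely when $\{\alpha n\}\in[0,1-\beta)$. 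Both of these intervals have positive length, so by density of $\{\alpha n\}$ each occurs for infinitely many $n$; in fact by equidistribution each occurs with a positive limiting frequency ($\beta$ and $1-\beta$ respectively), though mere infinitude is all that is claimed.

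The only mild subtlety — and the step I would be most careful about — is keeping the case analysis between $\lfloor\cdot\rfloor$ and $\lceil\cdot\rceil$ consistent, since the statement is phrased with round-down in the definition of $\sigma_\alpha(n)$ but the remark just before the proposition notes that for irrational $\alpha$ one may equally use round-up; I would simply fix the round-down form throughout and invoke irrationality (so that $\alpha n\notin\ZZ$ for $n\ge 1$) wherever needed to guarantee strict inequalities and to identify $\lceil\alpha\rceil$ with $\lfloor\alpha\rfloor+1$. No deeper ingredient is required: the whole proof is the interval-length argument plus density of $\{\alpha n\}$.
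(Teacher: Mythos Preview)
Your proof is correct and follows essentially the same approach as the paper: both establish $\sigma_\alpha(n)\in(\alpha-1,\alpha+1)$ via the fractional-part identity $\sigma_\alpha(n)=\alpha+\{\alpha n\}-\{\alpha(n+1)\}$, then invoke equidistribution of $\{\alpha n\}$ modulo $1$ (the paper's Theorem~\ref{TheoremB1}) to show that each of the intervals $\{\alpha n\}\in(1-\{\alpha\},1)$ and $\{\alpha n\}\in[0,1-\{\alpha\})$ is hit infinitely often. The only cosmetic difference is that the paper phrases the dichotomy as $\{\alpha n\}+\{\alpha\}\gtrless 1$ rather than $\{\alpha n\}\gtrless 1-\beta$, which is the same condition.
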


We will give a  short proof of Proposition \ref{Prop1*} in this subsection.

\begin{Definition} Let $x_n\in \RR$ be a sequence. The sequence $x_n$ is uniformly distributed Mod 1 if for every $a,b$ with $0\le a<b\le 1$, we have that
$$
\lim_{n\rightarrow\infty}\frac{1}{n}{\rm card}\{0\le j\le n-1\mid \{x_j\}\in [a,b]\}=b-a.
$$
\end{Definition}

A proof of the following theorem is given in formula (1.1) of page 9 of \cite{Pa}.

\begin{Theorem}\label{TheoremB1} Let $\alpha\in \RR$ be irrational. Then the sequence $n\alpha$ for $n\in \NN$ is uniformly distributed mod 1.
\end{Theorem}

We now give a proof of Proposition \ref{Prop1*},
 following the note \cite{Co}.

We have that $\alpha(n+1)-1<\lfloor\alpha(n+1)\rfloor<\alpha(n+1)$ and $\alpha n-1<\lfloor \alpha n\rfloor< \alpha n$. Thus
$$
\alpha(n+1)-1-\alpha n<\lfloor\alpha(n+1)\rfloor-\lfloor\alpha n\rfloor<\alpha(n+1)-(\alpha n-1)
$$
and so
$$
\alpha-1<\lfloor\alpha(n+1)\rfloor-\lfloor\alpha n\rfloor<\alpha+1.
$$
Since $\lfloor \alpha(n+1)\rfloor-\lfloor\alpha n\rfloor$ is an integer and $\alpha$ is not, 
$\lfloor\alpha(n+1)\rfloor-\lfloor\alpha n\rfloor$ is equal to $\lfloor  \alpha\rfloor$ or $\lceil \alpha\rceil$.
We have that
\begin{equation}\label{eqB1}
\lfloor\alpha(n+1)\rfloor-\lfloor\alpha n\rfloor
=\alpha(n+1)-\{\alpha(n+1)\}-(\alpha n-\{\alpha n\})
=\alpha+\{\alpha n\}-\{\alpha n+\alpha\}.
\end{equation}
For $x,y\in \RR$, we have that
$$
\{x+y\}=\left\{\begin{array}{l}
\{x\}+\{y\}\mbox{ if }\{x\}+\{y\}<1\\
\{x\}+\{y\}-1\mbox{ if }\{x\}+\{y\}\ge1.
\end{array}\right.
$$
By Theorem \ref{TheoremB1}, choosing $a,b$ such that 
$1-\{\alpha\}<a<b<1$, we find that there are infinitely many $n\in \NN$ such that $\{\alpha n\}+\{\alpha\}>1$. Hence, 
$$
\begin{array}{lll}
\sigma_{\alpha}(n)&=&\lfloor\alpha(n+1)\rfloor-\lfloor \alpha n\rfloor
=\alpha+\{\alpha n\}-\{\alpha n+\alpha\}=\alpha+\{\alpha n\}-(\{\alpha n\}+\{\alpha\}-1)\\
&=&\alpha-\{\alpha\}+1=\lceil\alpha\rceil.
\end{array}
$$
Hence $\lceil\alpha\rceil$ appears infinitely many times in the sequence $\sigma_{\alpha}(n)$.

Again by Theorem \ref{TheoremB1}, choosing $a,b$ so that $0<a<b<1-\{\alpha\}$, we find there are infinitely many $n\in \NN$ such that $\{\alpha n\}+\{\alpha\}<1$. Continuing as in the above paragraph, we find that $\lfloor\alpha\rfloor$ occurs infinitely many times in the sequence $\sigma_{\alpha}(n)$.

\subsection{Geometric preliminaries} Let $V$ be a projective variety over a field $k$ and $\mathcal F$ be a coherent sheaf on $V$. We will write $h^i(\mathcal F)=\dim_kH^i(V,\mathcal F)$.
The Euler characteristic of $\mathcal F$ is 
$$
\chi(\mathcal F)=\sum_{i=0}^{\dim\mathcal F}(-1)^ih^i(\mathcal F).
$$

Similar statements to the following Lemma \ref{LemmaVan} are well known. For the readers convenience, we give a proof.

\begin{Lemma}\label{LemmaVan}
Suppose that $T$ is a nonsingular projective surface over a field of characteristic zero and $\{D_n\}_{n\in\ZZ_{>0}}$ is a family of ample divisors on $T$, having the property that if $C$ is a curve on $T$, then there exists a constant $\gamma(C)$ such that $(C\cdot D_n)\le \gamma(C)n$ for all $n\in\ZZ_{>0}$. Suppose that $G$ is a divisor on $T$. Then there exists a constant $c$ such that
$$
h^1(\mathcal O_T(D_n+G))\le cn\mbox{ and }h^2(\mathcal O_T(D_n+G))\le c\mbox{ for all $n\in\ZZ_{>0}$.}
$$
\end{Lemma}

\begin{proof}
By Kodaira vanishing, 
\begin{equation}\label{eqVan2}
H^i(T,\mathcal O_T(D_n+K_T))=0\mbox{ for all $n>0$ and $i>0$}
\end{equation}
 where $K_T$ is a canonical divisor on $T$.

Suppose that $Z$ is an integral nonsingular curve on $T$. From the short exact sequence $0\rightarrow \mathcal O_T(-Z)\rightarrow \mathcal O_T\rightarrow \mathcal O_Z\rightarrow 0$, we have exact sequences
$$
\begin{array}{l}
H^1(T,\mathcal O_T(D_n+K_T))\rightarrow H^1(T,\mathcal O_T(D_n+K_T+Z))\rightarrow H^1(Z,\mathcal O_T(D_n+K_T+Z)\otimes\mathcal O_Z))\\\rightarrow H^2(T,\mathcal O_T(D_n+K_T))\rightarrow H^2(T,\mathcal O_T(D_n+K_T+Z))\rightarrow 0.
\end{array}
$$
Since $((D_n+K_T+Z)\cdot Z)$ is bounded from below (as the $D_n$ are ample) we have that $h^1(\mathcal O_T(D_n+K_T+Z)\otimes\mathcal O_Z))$ is bounded by Lemma 3.1 \cite{C1}. Thus using equation (\ref{eqVan2}),
there exists a constant $c$ such that
\begin{equation}\label{eqVan1}
h^1(\mathcal O_T(D_n+K_T+Z))\le c\mbox{ and }h^2(\mathcal O_T(D_n+K_T+Z))=0\mbox{ for all }n>0.
\end{equation}

Now suppose that $G'$ is a divisor on $T$. By Bertini's theorem, we have a linear equivalence $G'\sim A-B$ where $A$ and $B$ are ample effective divisors and $A$ and $B$ are integral nonsingular curves on $S$. From the short exact sequence
$$
0\rightarrow \mathcal O_T(-B)\rightarrow \mathcal O_T\rightarrow \mathcal O_B\rightarrow 0,
$$
 we have exact sequences 
 $$
 \begin{array}{l}
 H^0(B,\mathcal O_T(D_n+K_T+A)\otimes\mathcal O_B)\rightarrow H^1(T,\mathcal O_T(D_n+K_T+G'))\\
 \rightarrow H^1(T,\mathcal O_T(D_n+K_T+A))
 \rightarrow H^1(B,\mathcal O_T(D_n+K_T+A)\otimes\mathcal O_B)\\
 \rightarrow H^2(T,\mathcal O_T(D_n+K_T+G'))
 \rightarrow H^2(T,\mathcal O_T(D_n+K_T+A))\rightarrow 0.
  \end{array}
 $$
 
 By the Riemann Roch theorem on a nonsingular curve (formula (6)  and Lemma 3.1 \cite{C},  Section 7.3 \cite{L} or Section IV.1 of \cite{H} over an algebraically closed field), Lemma 3.1 \cite{C1} and our assumption that $\deg(\mathcal O_T(D_n)\otimes\mathcal O_B)\le\gamma(B)n$, we have that there exists a constant $c'$ such that
 $h^0(B,\mathcal O_T(D_n+K_T+A)\otimes\mathcal O_B)\le c'n$ for all $n$. Now using formula (\ref{eqVan1}), we see that there is a constant $c_1$ such that
 \begin{equation}\label{eqVan4}
 h^1(\mathcal O_T(D_n+K_T+G')\le c_1n\mbox{ and }h^2(\mathcal O_T(D_n+K_T+G'))\le c_1
 \end{equation}
 for all $n\in \NN$.
 
 Let $G$ be a divisor on $T$. Take $G'=G-K_T$ in (\ref{eqVan4}), and the conclusions of the lemma follow.
 \end{proof}

\subsection{The construction}

We recall the construction of Section 4 of \cite{C}.  A commutative diagram of projective varieties
$$
\begin{array}{rll}
Y\\
\tau\downarrow&\searrow\\
X&\stackrel{\lambda}{\rightarrow}&\overline Z\\
\pi\downarrow\\
S
\end{array}
$$
is constructed, where $X$ is a 3-fold which is a projective bundle over an Abelian surface $S$ (which we assume is over an algebraically closed field $k$ of characteristic zero) and $\tau$ is the blowup of a nonsingular curve $C$ lying on the zero section $S_0$ of $\pi$, with exceptional divisor $F$. $\tau$ induces a morphism $\overline\tau:F\rightarrow C$, realizing $F$ as   a ruled surface over $C$. Let $\overline S$ be the strict transform of $S_0$, which is isomorphic to $S$. Let $\overline Z$ be the normal projective 3-fold which is obtained by contracting $S_0$ (it is shown in \cite{C} that there is a morphism $X\rightarrow \overline Z$ with this property). 
Let $q$ be the closed (singular) point of $\overline Z$ which is the image of $S_0$. In  \cite{C} it is shown that $\lambda:X\rightarrow \overline Z$ is the blowup of $\lambda_*\mathcal O_X(-mS_0)$ for some large $m$, which is  an ideal sheaf whose cosupport is $\lambda(S_0)=q$. Since $\tau:Y\rightarrow X$ is the blowup of an ideal sheaf whose cosupport $C$ is contained in $S_0$, we have that $Y\rightarrow \overline Z$ is the blowup of an ideal sheaf whose cosupport is $q$.

Let $R=\mathcal O_{\overline Z,q}$ and let $U=Y\times_{\overline Z}\mbox{Spec}(R)$ with induced projective morphism $U\rightarrow \mbox{Spec}(R)$. 
By the above, $U$ is the blowup of an $m_R$-primary ideal $I$ of $R$.

\subsection{The example}\label{SecExample1}

By the formula before Lemma 2.2 \cite{C} and Theorem 4.1 \cite{C}, we have that
$$
I(nF):=\Gamma(U,\mathcal O_U(-nF))=\Gamma(U,\mathcal O_U(-\lceil n\frac{3}{9-\sqrt{3}}\rceil \overline S-nF)).
$$
Let $D_n=\lceil n\frac{3}{9-\sqrt{3}}\rceil \overline S+nF$ for $n\in \ZZ_{>0}$. The closure of the ample cone of $\overline S$  is equal to the closure of the nef cone of $\overline S$; that is, $\overline{\rm Amp}(\overline S) =\overline{\rm Nef}(\overline S)$. It is  shown before formula (14) on page 12 of \cite{C}
that 
\begin{equation}\label{eqEx1}
\mathcal O_U(-D_n)\otimes\mathcal O_{\overline S}\mbox{ is ample}. 
\end{equation}

Let $C_0$ be the zero section of $\overline\tau:F\rightarrow C$. The closure of the effective cone of $F$ is $\overline{\rm Eff}(F)=\RR_{\ge 0}C_0+\RR_{\ge 0}f$ by a formula on page 15 \cite{C}, where $f$ is the fiber by $\overline\tau$ of a closed point of $C$. It is shown on page 14 \cite{C} that the class of $\mathcal O_U(F)\otimes\mathcal O_F$ in $({\rm Pic}(F)/\equiv)\otimes\RR$ is that of $\mathcal O_U(-C_0-108f)$. Thus the class of 
$\mathcal O_U(-D_n)\otimes \mathcal O_F$ in $({\rm Pic}(F)/\equiv)\otimes\RR$ is that of
$\mathcal O_U((n-\lceil n\frac{3}{9-\sqrt{3}})C_0+n108 f)$.

On page 14 \cite{C}, the intersection products 
$$
(C_0^2)_F=-162, (C_0\cdot f)_F=1 \mbox{ and }(f^2)_F=0
$$
are computed. We may thus compute that
$$
((\mathcal O_U(-D_n)\otimes\mathcal O_F)\cdot C_0)>0\mbox{ and }(\mathcal O_U(-D_n)\otimes\mathcal O_F)\cdot f)>0
$$
for $n\gg 0$,
so that $((\mathcal O_U(-D_n)\otimes\mathcal O_F)^2)>0$. Thus there exists $n_0$ such that
\begin{equation}\label{eqEx3}
\mathcal O_U(-D_n)\otimes\mathcal O_F\mbox{ is ample }
\end{equation}
for $n\ge n_0$ 
by the Nakai-Moishezon criterion (Theorem V.1.10 \cite{H}).

\begin{Lemma}\label{Lemmabound} There exists a constant $c'$ such that 
$$
\lambda_R(H^1(U,\mathcal O_U(-D_n))\le c'n\mbox{ and }\lambda_R(H^2(U,\mathcal O_U(-D_n))\le c'\mbox{ for all $n\ge 0$}.
$$
\end{Lemma}

\begin{proof} As we observed at the beginning of this section, $U$ is the blowup of an $m_R$-primary ideal $I$, so there exists an effective  divisor $A=a\overline S+bF$ such that 
$\mathcal O_U(-A)=I\mathcal O_U$ and so 
$-A$ is ample on $U$. There exists a sequence of surfaces $T_1, T_2,\ldots,T_r$ where $r=a+b$ such that each $T_i$ is one of $\overline S$ or $F$ and $A=T_1+\cdots+T_r$. We have short exact sequences
\begin{equation}\label{eqVan8}
\begin{array}{l}
0\rightarrow O_U(-T_1)\otimes \mathcal O_{T_2}\rightarrow \mathcal O_{T_1+T_2}\rightarrow \mathcal O_{T_1}\rightarrow 0\\
\,\,\,\,\,\,\,\,\,\,\,\,\,\,\,\vdots\\
0\rightarrow \mathcal O_U(-(T_1+\cdots+T_{r-1}))\otimes\mathcal O_{T_r}\rightarrow \mathcal O_A\rightarrow \mathcal O_{T_1+\cdots+T_{r-1}}\rightarrow 0\\
0\rightarrow \mathcal O_U(-A)\otimes\mathcal O_{T_1}\rightarrow \mathcal O_{A+T_1}\rightarrow \mathcal O_A\rightarrow 0\\
0\rightarrow \mathcal O_U(-A-T_1)\otimes \mathcal O_{T_2}\rightarrow \mathcal O_{A+T_1+T_2}\rightarrow \mathcal O_{A+T_1}\rightarrow 0\\
\,\,\,\,\,\,\,\,\,\,\,\,\,\,\,\vdots\\
0\rightarrow \mathcal O_U(-A-(T_1+\cdots+T_{r-1}))\otimes\mathcal O_{T_r}\rightarrow \mathcal O_{2A}\rightarrow \mathcal O_{A+T_1+\cdots+T_{r-1}}\rightarrow 0\\
\,\,\,\,\,\,\,\,\,\,\,\,\,\,\,\vdots\\
\end{array}
\end{equation}
There exists $t_0$ such that $t\ge t_0$ implies $\mathcal O_U(-tA-(T_1+\cdots+T_{i-1}))\otimes\mathcal O_{T_i}(-K_{T_i})$ is ample on $T_i$, for $1\le i\le r$, where $K_{T_i}$ is a canonical divisor on $T_i$. Since $\mathcal O_U(-D_n)\otimes \mathcal O_{T_i}$ are ample for all $i$ and $n\ge n_0$ by 
(\ref{eqEx1}) and (\ref{eqEx3}),
$$
H^i(T_i,\mathcal O_T(-D_n-tA-(T_1+\cdots+T_{i-1}))\otimes\mathcal O_{T_i})=0
$$
 for all $n\ge n_0$, $i$ and $t\ge t_0$ by Kodaira vanishing. From the exact sequences of (\ref{eqVan8}), we see that 
\begin{equation}\label{eqVan6}
H^i(\mathcal O_U(-D_n)\otimes\mathcal O_{mA+T_1+\cdots+T_i})\cong H^i(U,\mathcal O_U(-D_n)\otimes\mathcal O_{t_0A})
\end{equation}
for $n\ge n_0$, $m\ge t_0$ and $1\le i\le r-1$. We may now replace $A$ with $t_0A$ and replace $r$ with $u=t_0r$ and consider the short exact sequences
\begin{equation}\label{eqVan7}
\begin{array}{l}
0\rightarrow \mathcal O_T(-T_1)\otimes\mathcal O_{T_2}\rightarrow \mathcal O_{T_1+T_2}\rightarrow \mathcal O_{T_1}\rightarrow 0\\
\,\,\,\,\,\,\,\,\,\,\,\,\,\,\,\vdots\\
0\rightarrow \mathcal O_T(-(T_1+\cdots +T_{u-1}))\otimes\mathcal O_{T_u}\rightarrow \mathcal O_A\rightarrow \mathcal O_{T_1+\cdots+T_{u-1}}\rightarrow 0.
\end{array}
\end{equation}
By (\ref{eqEx1}), (\ref{eqEx3}) and Lemma \ref{LemmaVan}, there exists a positive constant $c_2$ such that
$$
h^1(\mathcal O_T(-D_n)\otimes\mathcal O_{T_1})\le c_2n,\,\, h^1(\mathcal O_U(-D_n-(T_1+\cdots+T_{i-1}))\otimes \mathcal O_{T_i}))\le c_2n,
$$
$$
h^2(\mathcal O_T(-D_n)\otimes\mathcal O_{T_1})\le c_2,\,\, h^2(\mathcal O_T(-D_n-(T_1+\cdots+T_{i-1}))\otimes \mathcal O_{T_i}))\le c_2
$$
for $2\le i\le u$ and all $n\ge 0$. 

Thus for $n\ge n_0$,
$$
\begin{array}{l}
\lambda_R(\lim_{\leftarrow}H^1(U,\mathcal O_U(-D_n)\otimes\mathcal O_U/\mathcal O_U(-mA)))\\
=\dim_k(\lim_{\leftarrow}H^1(U,\mathcal O_U(-D_n)\otimes\mathcal O_U/\mathcal O_U(-mA)))\\
\le uc_2n
\end{array}
$$
and
$$
\begin{array}{l}
\lambda_R(\lim_{\leftarrow}H^2(U,\mathcal O_U(-D_n)\otimes\mathcal O_U/\mathcal O_U(-mA)))\\
=\dim_k(\lim_{\leftarrow}H^2(U,\mathcal O_U(-D_n)\otimes\mathcal O_U/\mathcal O_U(-mA)))\\
\le uc_2,
\end{array}
$$ 
 where the inverse limits are over $m\in\ZZ_{>0}$. Now $I$ is $m_R$-primary and $I^m\mathcal O_U=\mathcal O_U(-mA)$, so that 
$$
\lim_{\leftarrow} H^i(U,\mathcal O_U(-D_n)\otimes_R R/m_R^m)\cong
\lim_{\leftarrow}H^i(U,\mathcal O_U(-D_n)\otimes_{\mathcal O_U}\mathcal O_U/\mathcal O_U(-mA))
$$
where the limits are over $m\in\ZZ_{>0}$.
Since the $R$-modules $H^i(U,\mathcal O_U(-D_n))$ are supported on $m_R$, The $m_R$-adic completions of these modules are isomorphic to themselves. By the theorem on formal functions, Theorem III.1.1 \cite{H}, 
$$
H^i(U,\mathcal O_U(-D_n))\cong \lim_{\leftarrow} H^i(U,\mathcal O_U(-D_n)\otimes_RR/m_R^n)
$$
for $n\ge n_0$ and $i=1$ and 2. Now choose $c_3$ so that $\lambda_RH^i(U,\mathcal O_U(-D_n))\le c_3$ for $n\ge n_0$ and $i=1,2$. Let $c'=uc_2+c_2$, so the conclusions of the lemma hold.

\end{proof}

We now establish that
\begin{equation}\label{form1}
\lambda_R(R/I(nF))=\chi(\mathcal O_{D_n})+\tau(n)\mbox{ for $n\in \NN$}
\end{equation}
where $\tau(n)$ is a function such that there exists a constant $c$ such that $|\tau(n)|\le cn$ for all $n\in \NN$.
We start with the short exact sequences
$$
0\rightarrow \mathcal O_U(-D_n)\rightarrow \mathcal O_U\rightarrow \mathcal O_{D_n}\rightarrow 0,
$$
yielding, since the dimension of the preimage of $m_R$ in $U$ is two and $R$ is normal,  exact sequences
$$
\begin{array}{l}
0\rightarrow R/I(nF)\rightarrow H^0(D_n,\mathcal O_{D_n})\rightarrow H^1(U,\mathcal O_U(-D_n))\rightarrow H^1(U,\mathcal O_U)\rightarrow H^1(D_n,\mathcal O_{D_n})\\
\rightarrow H^2(U,\mathcal O_U(-D_n))\rightarrow H^2(U,\mathcal O_U)\rightarrow H^2(U,\mathcal O_{D_n})\rightarrow 0
\end{array}
$$
giving the formula
$$
\begin{array}{lll}
\lambda_R(R/I(nF))&=&\chi(\mathcal O_{D_n})-\lambda_RH^1(U,\mathcal O_U(-D_n))+\lambda_RH^1(U,\mathcal O_U)\\
&&+\lambda_RH^2(U,\mathcal O_U(-D_n))-\lambda_RH^2(U,\mathcal O_U).
\end{array}
$$
Formula (\ref{form1}) is now obtained from Lemma \ref{Lemmabound}.

We can compute $\chi(\mathcal O_{D_n})$ by regarding $D_n$ as a closed subscheme of the projective variety $Y$. From the exact sequence
$$
0\rightarrow \mathcal O_Y(-D_n)\rightarrow \mathcal O_Y\rightarrow \mathcal O_{D_n}\rightarrow 0,
$$
we obtain that 
\begin{equation}\label{form2}
\chi(\mathcal O_{D_n})=\chi(\mathcal O_Y)-\chi(\mathcal O_Y(-D_n)).
\end{equation}
Now from the Riemann Roch theorem for three dimensional nonsingular varieties (Exercise A.6.7 
\cite{H}) 
\begin{equation}\label{form3}
\begin{array}{lll}
\chi(\mathcal O_Y(-D_n))&=&\frac{1}{12}(-D_n\cdot (-D_n-K_Y)\cdot(2(-D_n)-K_Y))\\
&&+\frac{1}{12}((-D_n)\cdot c_2(Y))+1-p_a(Y)\\
&=& -\frac{1}{6}(D_n^3)-\frac{1}{4}(D_n^2\cdot K_Y)-\frac{1}{12}(D_n\cdot K_Y^2+c_2(Y))+1-p_a(Y).
\end{array}
\end{equation}
Thus from (\ref{form1}), we obtain that there is a function $\delta(n)$ such that
\begin{equation}\label{form10}
\lambda_R(R/I(nF))=\frac{1}{6}(D_n^3)+\frac{1}{4}(D_n^2\cdot K_Y)+\delta(n)
\end{equation}
where there is a constant $c_1$ such that $|\delta(n)|<c_1n$ for all $n\in \NN$. 

Let $\alpha=\frac{3}{9-\sqrt{3}}$, so that
$$
D_n=\lceil \alpha n\rceil\overline S+nF.
$$
We have
$$
\alpha=\frac{3}{9-\sqrt{3}}=\frac{9}{26}+\frac{\sqrt{3}}{26},\,\, \alpha^2=\frac{84}{26^2}+\frac{18}{26^2}\sqrt{3},\,\,
\alpha^3=\frac{810}{26^3}+\frac{246}{26^3}\sqrt{3}.
$$

On page 16 of \cite{C}, it is shown that
\begin{equation}\label{form4}
(\overline S^3)=468,\,\, (\overline S^2\cdot F)=-162,\,\, (\overline S\cdot F^2)=54,\,\, (F^3)=54.
\end{equation}
We compute that
\begin{equation}\label{form5}
\begin{array}{lll}
(D_n^3)&=&\lceil \alpha n\rceil^3(\overline S^3)+3n\lceil \alpha n\rceil^2(\overline S^2\cdot F)+3n^2\lceil\alpha n\rceil(\overline S\cdot F^2)+n^3(F^3)\\
&=& 468\lceil \alpha n\rceil^3-486n\lceil \alpha n\rceil^2+162 n^2\lceil\alpha n\rceil+54 n^3
\end{array}
\end{equation}
and
\begin{equation}\label{form6}
(D_n^2\cdot K_Y)=\lceil \alpha n\rceil^2(\overline S^2\cdot K_Y)+2n\lceil n\alpha\rceil (\overline S\cdot F\cdot K_Y)+n^2(F^2\cdot K_Y).
\end{equation}

Thus
\begin{equation}\label{form21}
\lim_{n\rightarrow\infty}\frac{(D_n)^3}{n^3}=468 \alpha^3-486\alpha^2+162\alpha+54=\frac{12042}{169}-\frac{27}{169}\sqrt{3}.
\end{equation}

We obtain 
\begin{equation}\label{form7}
e(\mathcal I(F))=3!\left(\lim_{n\rightarrow\infty}\frac{\lambda_R(R/I(nF))}{n^3}\right)=3!(\alpha^3468-486\alpha^2+162\alpha+54)
=\frac{72252}{169}-\frac{162}{169}\sqrt{3}.
\end{equation}


We give the numeric expression of \ref{form6}, which is computed using the formulas (\ref{form4}) and other formulas from \cite{C}. This formula is not necessary to compute the asymptotic behavior of the first difference function $\lambda_R(I(nF))/I((n+1)F)$.
\begin{equation}\label{form22}
(D_n^2\cdot K_Y)=-792\lceil n\alpha\rceil^2+564n\lceil n\alpha\rceil -175 n^2.
\end{equation}

We now calculate the first difference function.

\begin{equation}\label{form8}
\lambda_R(I(nF)/I((n+1)F))=\lambda_R(R/I((n+1)F))-\lambda_R(R/I(nF))=\frac{1}{6}\left((D_{n+1}^3)-(D_n^3)\right)+\epsilon(n)
\end{equation}
where there is a constant $c''$ such that $|\epsilon(n)|<c''n$ for $n\in\NN$. We have
\begin{equation}\label{form9}
\begin{array}{lll}
\left((D_{n+1}^3)-(D_n^3)\right)&=&
468\left(\lceil\alpha(n+1)\rceil^3-\lceil\alpha n\rceil^3\right)-486\left((n+1)\lceil\alpha(n+1)\rceil^2-n\lceil\alpha n\rceil^2\right)\\
&&+162\left((n+1)^2\lceil\alpha(n+1)\rceil-n^2\lceil\alpha n\rceil\right)+54\left((n+1)^3-n^3\right)\\
&=&468(\sigma_{\alpha}(n)(\lceil\alpha(n+1)\rceil^2+\lceil\alpha(n+1)\rceil\lceil\alpha n\rceil+\lceil\alpha n\rceil^2)\\
&&-486\left(\sigma_{\alpha}(n)(\lceil\alpha(n+1)\rceil n+\lceil\alpha n\rceil n)+\lceil\alpha(n+1)\rceil^2\right)\\
&&+162\left(\sigma_{\alpha}(n) n^2+\lceil\alpha(n+1)\rceil(2n+1)\right)
+54(3n^2+3n+1)
\end{array}
\end{equation}

We have that
$$
\lceil\alpha\rceil=\lceil\frac{3}{9-\sqrt{3}}\rceil=1\mbox{ and }\lfloor \alpha\rfloor = 0.
$$
By Proposition \ref{Prop1*}, the positive integers are the disjoint union $\ZZ_{>0}=\Sigma_1\coprod \Sigma_2$ of two infinite sets, where $\sigma_{\alpha}(n)=0$ if $n\in \Sigma_1$ and $\sigma_{\alpha}(n)=1$ if $n\in \Sigma_2$.

The limit as $n\rightarrow\infty$ restricted to $n\in \Sigma_1$ of $\frac{\lambda_R(I(nF)/I((n+1)F)}{n^2}$ is thus
$$
\frac{1}{6}\left(-486\alpha^2+324\alpha+162\right)
=\frac{144504}{6(26)^2}-\frac{324}{6(26)^2}\sqrt{3}
$$
and the limit as $n\rightarrow\infty$ restricted to $n\in \Sigma_2$ of $\frac{\lambda_R(I(nF)/I((n+1)F)}{n^2}$ is
$$
\frac{1}{6}\left(918\alpha^2-810\alpha+324\right)
=\frac{106596}{6(26)^2}-\frac{4536}{6(26)^2}\sqrt{3}.
$$
These two limits, over $n$ in $\Sigma_1$ and $\Sigma_2$ respectively are not equal, so the limit
$$
\lim_{n\rightarrow\infty}\frac{\lambda_R(I(nF)/I((n+1)F)}{n^2}
$$
does not exist.

	\section{Symbolic powers with infinitely many Rees valuations}\label{SecSym}

In this section we construct an example of a height one prime ideal in a two dimensional normal local ring $R$ such that for all $n\ge 1$, the symbolic power $P^{(n)}$ has a Rees valuation other then the $PR_P$-adic  valuation $v_P$, but for  $1\le m<n$, $v_P$ is the only common Rees valuation of $P^{(m)}$ 
and $P^{(n)}$. In particular, the set of Rees valuations of $P^{(n)}$ over all  $n\ge 1$ is infinite. 

The symbolic algebra $\oplus_{n\ge 0}P^{(n)}$ is not finitely generated, for instance by Corollary 5.2 \cite{CPS}, since
the analytic spread of $P^{(n)}$ is  $2=\dim R$ for all $n$.  

We will show that the analytic spread 
\begin{equation}\label{E5}
\ell(\oplus_{n\ge 0}P^{(n)})=\dim (\oplus_{n\ge 0}P^{(n)})/m_R(\oplus_{n\ge 0}P^{(n)})=0,
\end{equation}
 where $m_R$ is the maximal ideal of $R$. Thus the fiber $\psi^{-1}(m_R)=\emptyset$, where 
 $$
 \psi:\mbox{Proj}(\oplus_{n\ge 0}P^{(n)})\rightarrow\mbox{Spec}(R)
 $$
  is the natural projection.

We now make the construction. Let $E$ be an elliptic curve over an algebraically closed field $k$ and $p\in E$ be  a (closed) point. Let $\mathcal L=\mathcal O_E(p)$ and $V(\mathcal L)=\mbox{Spec}(\oplus_{n\ge 0}\mathcal L^n)$, the total space of the line bundle $\mathcal L^{-1}$, with canonical morphism $\pi:V(\mathcal L)\rightarrow E$. By Propositions II.8.4.2 and II.4.4 \cite{EGA}, we have a natural inclusion 
$$
V(\mathcal L)\subset \PP(\mathcal L\oplus\mathcal O_E)\cong \PP(\mathcal O_E\oplus\mathcal L^{-1})
$$
 and the zero section $E_0$ of $\pi$ is the image of the inclusion $\PP(\mathcal L^{-1})\subset \PP(\mathcal O_E\oplus \mathcal L^{-1})$. By Proposition V.2.9 \cite{H}, 
 $$
 \mathcal O_{V(\mathcal L)}(E_0)\otimes\mathcal O_{E_0}\cong \mathcal O_{\PP(\mathcal O_E\oplus\mathcal L^{-1})}(E_0)\otimes\mathcal O_{E_0}\cong \mathcal O_E(-p).
 $$
  Let $A=\oplus_{n\ge 0}\Gamma(E,\mathcal L^n)$.
  $A$ is a normal domain. 
By Proposition II.8.8.2 and Remark II.8.8.3 \cite{EGA}, 
there exists a canonical  projective morphism $V(\mathcal L)\rightarrow \mbox{Spec}(A)$ which contracts the zero section of $E_0$ of $V(\mathcal L)$ to the graded maximal ideal $\mathfrak m$ of $A$ and is an isomorphism on $V(\mathcal L)\setminus E_0\rightarrow  \mbox{Spec}(A)\setminus\{\mathfrak m\}$. Let $R=\hat A$, the completion of $A$ at $\mathfrak m$. $R$ is a complete local domain. Let $m_R$ be the maximal ideal of $R$. Let $X=V(\mathcal L)\times_{\mbox{Spec}(A)}\mbox{Spec}(R)$, with natural birational projective morphism $\phi:X\rightarrow \mbox{Spec}(R)$.
We will identify $E_0$ with $E$. 

There exists a point $q\in E$ such that  $\mathcal O_E(q-p)$ has infinite order in the Jacobian of $E$. Since $R$ is complete, there exists a curve $C$ on the nonsingular scheme $X$ such that $C\cdot E=q$. Let $P=\Gamma(X,\mathcal O_X(-C))$, a height one prime ideal in $R$. The $n$-th symbolic power of $P$ is $P^{(n)}=\Gamma(X,\mathcal O_X(-nC))$. The Zariski decomposition of $C$ is $\Delta=C+E$ (Lemma 4.1 \cite{C1}) and $\Gamma(X,\mathcal O_X(-nC))=\Gamma(X,\mathcal O_X(-n\Delta))$ for all $n>0$ by Lemma 4.3 \cite{C1}. By Lemma 2.2  \cite{C1}, for all $n>0$, the base locus of $-n\Delta$ is a subset of $E$. The curve $E$ is in the base locus of $-n\Delta$ for all $n>0$ since $\mathcal O_X(-n\Delta)\otimes\mathcal O_{E}\cong \mathcal O_E(n(q-p))$ and so $\Gamma(E,\mathcal O_X(-n\Delta)\otimes\mathcal O_E)=0$. Thus 
\begin{equation}\label{E6}
\Gamma(X,\mathcal O_X(-nC))=\Gamma(X,\mathcal O_X(-n\Delta-E))
\end{equation}
 for all $n\ge 1$.

We have short exact sequences 
\begin{equation}\label{E1}
0\rightarrow \mathcal O_X(-n\Delta-2E)\rightarrow \mathcal O_X(-n\Delta -E)\rightarrow \mathcal O_X(-n\Delta-E)\otimes\mathcal O_{E}\rightarrow 0.
\end{equation}
By Theorem III.11.1 \cite{H},
$$
H^1(X,\mathcal O_X(-n\Delta-2E))\cong \lim_{\leftarrow}H^1(X,\mathcal O_X(-n\Delta-2E)\otimes\mathcal O_{mE}),
$$
where the limit is over the schemes $mE$. By Serre duality on an elliptic curve, $H^1(E,\mathcal O_E(D))=0$ if $D$ is a divisor on $E$ of positive degree. From induction on $m$ in the short exact sequences
$$
0\rightarrow \mathcal O_X(-n\Delta-2E-mE)\otimes \mathcal O_E\rightarrow \mathcal O_X(-n\Delta-2E)\otimes\mathcal O_{(m+1)E}\rightarrow \mathcal O_X(-n\Delta-2E)\otimes\mathcal O_{mE}\rightarrow 0,
$$
we have that 
$$
H^1(X,\mathcal O_X(-n\Delta-2E)\otimes \mathcal O_{(m+1)E})\cong H^1(X,\mathcal O_X(-n\Delta-2E)\otimes \mathcal O_{mE})
$$
for all $m\ge 1$. Since $H^1(X,\mathcal O_X(-n\Delta-2E)\otimes\mathcal O_E)=0$, we have that 
$$
H^1(X,\mathcal O_X(-n\Delta-2E)\otimes\mathcal O_{mE})=0
$$
 for all $m\ge 1$. Thus
\begin{equation}\label{E2}
H^1(X,\mathcal O_X(-n\Delta-2E))=0
\end{equation}
 for all $n$. By the Riemann Roch theorem on $E$, $h^0(\mathcal O_E(n(q-p)+p))=1$ for all $n$. Thus there exists a unique point $q_n\in E$ such that $\mathcal O_E(n(q-p)+p)\cong \mathcal O_E(q_n)$. Since $q-p$ has infinite order in the Jacobian of $E$, we have that the $q_n$ are distinct points 
 and $q_n\ne q$ for all $n$. By (\ref{E1}) and (\ref{E2}), we have surjections
 $$
 \Gamma(X,\mathcal O_X(-n\Delta-E))\rightarrow \Gamma(E,\mathcal O_X(-n\Delta-E)\otimes\mathcal O_E)=\Gamma(E,\mathcal O_E(q_n)).
 $$
Thus the base locus of $\Gamma(X,\mathcal O_X(-n\Delta-E))$ is the point $q_n$, and so for all $n$, 
\begin{equation}\label{E3}
P^{(n)}\mathcal O_X=\mathcal O_X(-n\Delta-E)\mathcal I_{n}
\end{equation} 
 where $\mathcal I_{n}$ is an ideal sheaf on $X$ such that the support of $\mathcal O_X/\mathcal I_n$ is $q_n$.

Let $\gamma_n:B_n\rightarrow \mbox{Spec}(R)$ be the normalization of the blow up of $P^{(n)}$. The Rees valuations of $P^{(n)}$ whose center on $R$ is $m_R$ are the valuations associated to the local rings $\mathcal O_{B_n,G}$ where $G$ is a prime exceptional divisor of $\gamma_n$. Let $C_n$ be the normalization of the blowup of $P^{(n)}\mathcal O_X$. We have  birational $\mbox{Spec}(R)$ morphisms
$$
\begin{array}{ccccc}
&&C_n&&\\
&\swarrow&&\searrow&\\
B_n&&&&X.\\
\end{array}
$$
The morphism $C_n\rightarrow X$ is not an isomorphism and there does not exist an $R$-morphism $X\rightarrow B_n$ since $P^{(n)}\mathcal O_X$ is not invertible. By Zariski's main theorem, there exists a curve $F$ on $C_n$ which maps to the point $q_n$ on $X$ and maps to a curve on $B_n$. Thus the valuation associated to the local ring $\mathcal O_{C_n,F}$ is a Rees valuation of $P^{(n)}$. 
Since the points $q_n$ are all distinct, we conclude that the ideals $P^{(n)}$ have infinitely many Rees valuations. 

It remains to show that if $m\ne n$, then there are  no common Rees valuations of  $P^{(m)}$ and $P^{(n)}$  which dominate the maximal ideal $m_R$ of $R$. By the above analysis,  any exceptional curve for $B_n\rightarrow \mbox{Spec}(R)$ either comes from a curve on $C_n$ which contracts to $q_n$ on $X$ or is the image of the strict transform of $E$ on $C_n$.
 Since the $q_n$ are all distinct, we are reduced to showing that the valuation of $\mathcal O_{X,E}$ is not a Rees valuation of $P^{(n)}$ for any $n$.

Let $\alpha_n:Y_n\rightarrow X$ be the blow up of $q_n$. Let $C_n$ be the strict transform of $C$ on $Y_n$, $E_n$ be the strict transform of $E$ and let $F_n$ be the prime exceptional divisor of $Y_n\rightarrow X$. The curve $E_n$ is isomorphic to $E$. We have the intersection products
$E_n\cdot C_n=q$ and  $F_n\cdot E_n=q_n$. Further, 
$$
-p=E\cdot E=E_n\cdot \alpha_n^*E=E_n\cdot(E_n+F_n)=E_n\cdot E_n+q_n.
$$
Thus $E_n\cdot E_n=-p-q_n$, and so 
\begin{equation}\label{E4}
(\alpha_n^*(-n\Delta-E)-F_n)\cdot E_n=q_n-q_n=0.
\end{equation}
 Since $\mathcal O_X/\mathcal I_n$ is supported at $q_n$, $\mathcal I_n\mathcal O_{Y_n}=\mathcal O_{Y_n}(-F_n)\mathcal J_n$ where $\mathcal J_n$ is an ideal sheaf on $Y_n$ such that $\mathcal O_{Y_n}\mathcal J_n$ is supported on $F_n$. By (\ref{E3}), we have that 
$$
P^{(n)}\mathcal O_{Y_n}=\mathcal O_{Y_n}(\alpha_n^*(-n\Delta-E)-F_n)\mathcal J_n.
$$
Now 
$$
\begin{array}{lll}
P^{(n)}&=&\Gamma(Y_n,\mathcal O_{Y_n}(\alpha_n^*(-n\Delta-E)-F_n)\mathcal J_n)
\subset \Gamma(Y_n,\mathcal O_{Y_n}(\alpha_n^*(-n\Delta-E)-F_n))\\
&\subset &\Gamma(Y_n,\mathcal O_{Y_n}(-n\overline C))=P^{(n)},
\end{array}
$$
 so
$$
P^{(n)}=\Gamma(Y_n,\mathcal O_{Y_n}(\alpha_n^*(-n\Delta-E)-F_n)).
$$
Now $E_n$ is not in the base locus of $\alpha_n^*(-n\Delta-E)-F_n$ by (\ref{E3}), so the image of 
$\Gamma(Y_n,\mathcal O_{Y_n}(\alpha_n^*(-n\Delta-E)-F_n))$ in  $\Gamma(E_n,\mathcal O_{Y_n}(-n\Delta-E)-F_n)\otimes \mathcal O_{E_n})$ is nonzero.  Further,
$\mathcal O_{Y_n}(\alpha_n^*(-n\Delta-E)-F_n)\otimes \mathcal O_{E_n}\cong \mathcal O_E$ by (\ref{E4}). Thus the image of $\Gamma(Y_n,\mathcal O_{Y_n}(\alpha_n^*(-n\Delta-E)-F_n))$  is equal to $\Gamma(E_n,\mathcal O_{E_n})=k$. Thus $E_n$ is disjoint from the base locus of $\alpha_n^*(-n\Delta-E)-F_n$, and thus $\mathcal O_{Y_n}/\mathcal J_n$ is supported at a finite number of points which are disjoint from $E_n$. 

By continuing to blow up points above the support of $\mathcal O_{Y_n}/\mathcal J_n$, we construct  a morphism $\beta_n:X_n\rightarrow Y_n$ such that $P^{(n)}\mathcal O_{X_n}$ is invertible. We then have that
$$
P^{(n)}\mathcal O_{X_n}=\mathcal O_{X_n}(-n\overline C_n-(n+1)\overline E_n-(n+2)\overline F_n-\sum a_iG_i)
$$
where $\overline C_n, \overline E_n,\overline F_n$ are the respective strict transforms of $C_n,E_n$ and $F_n$, $a_i$ are positive integers, and the $G_i$ are exceptional for $\beta_n$ and thus are disjoint from $\overline E_n$. We have that
$$
E_n\cdot (-n\overline C_n-(n+1)\overline E_n-(n+2)\overline F_n-\sum a_iG_i)=0
$$
so the valuation associated to $\mathcal O_{X,E}=\mathcal O_{X_n,\overline E_n}$ is not a Rees valuation  of $P^{(n)}$ by Corollary 6.4 \cite{C1}. 

Finally, we prove equation (\ref{E5}). Let $Q=\oplus_{n\ge 0} \Gamma(X,\mathcal O_X(-n\Delta-E))$. $Q$ is a prime ideal in 
$\oplus_{n\ge 0}P^{(n)}=\oplus_{n\ge 0}\Gamma(X,\mathcal O_X(-n\Delta))$ (as explained before Remark 6.6 \cite{C1}) and since $E$ is the only exceptional curve of $\phi:X\rightarrow \mbox{Spec}(R)$, $Q=\sqrt{m_R(\oplus_{n\ge 0}P^{(n)})}$ by Proposition 6.7 \cite{C1}. 

By Equation (\ref{E6}) and since $\Gamma(X,\mathcal O_X(-E))=\{f\in R\mid v_E(f)>0\}=m_R$, where $v_E$ is the valuation associated to $E$,
$Q=m_R\oplus(\oplus_{n>0}P^{(n)})$. Thus 
$$
\ell(\oplus_{n\ge 0}P^{(n)})=\dim \left[(\oplus_{n\ge 0}P^{(n)})/m_R(\oplus_{n\ge 0}P^{(n)})\right]
=\dim\left[ (\oplus_{n\ge 0}P^{(n)})/Q\right]=\dim R/m_R=0.
$$

\end{document}